\swapnumbers \theoremstyle{plain}
\newtheorem{thm}{Theorem}[section]
\newtheorem{lem}[thm]{Lemma}
\newtheorem{lem-defn}[thm]{Lemma and Definition}
\theoremstyle{definition}\newtheorem{rem}[thm]{Remark}
\newtheorem{defn}[thm]{Definition}
\theoremstyle{definition}
\newcommand{\Cal}{\mathcal}
\newcommand{\R}{\mathbb{R}}
\newcommand{\C}{\mathbb{C}}
\newcommand{\D}{\mathbb{D}}
\newcommand{\N}{\mathbb{N}}
 \DeclareMathOperator{\im}{Im}
\DeclareMathOperator{\ke}{Ker} 
 \DeclareMathOperator{\rank}{rank}
\DeclareMathOperator{\End}{End}
 \numberwithin{equation}{section}
\title [On the Jordan structure of holomorphic matrices]{On the Jordan structure of holomorphic matrices}
\author{J\"urgen Leiterer}
\address{Institut f\"ur Mathematik \\
Humboldt-Universit\"at zu Berlin \\Rudower Chaussee 25\\D-12489 Berlin , Germany}
\email{leiterer@mathematik.hu-berlin.de}
\date{}
\begin{document}

\begin{abstract} Let $X$ be an open subset of $\C^N$, and let $A$ be an $n\times n$ matrix of holomorphic functions on $X$. We call a point $\xi\in X$ {\bf Jordan stable} for $A$ if $\xi$ is not a splitting point of the eigenvalues of $A$ and, moreover, there is a neighborhood $U$ of $\xi$ such that, for each $1\le k\le n$, the number of Jordan blocks of size $k$ in the Jordan normal forms of $A(\zeta)$ is the same for all  $\zeta\in U$. H. Baumg\"artel \cite[S 3.4]{B4} proved that there is a nowhere dense closed analytic subset of $X$, which contains all points of $X$ which are not Jordan stable for $A$.
We give a new proof of this result. This proof has the advantage that the result can be obtained in a more precise form, and with some estimates. Also, this proof applies to arbitrary, possibly non-smooth, complex spaces $X$.
\end{abstract}

\maketitle
\section{Introduction}

Let $X$ be an open subset of $\C^N$, and let $A$ be an $n\times n$ matrix of holomorphic functions on $X$.

A  point $\xi\in X$ is called a {\bf splitting point of the eigenvalues} of $A$ if, for each neighborhood $U\subseteq X$ of $\xi$, there is a point $\zeta\in U$ such that $A(\zeta)$ has more eigenvalues than $A(\xi)$ (see Lemma \ref{15.1.17'} for an equivalent definition). It
 is well-known (cp. Remark \ref{24.11.16-}) that the set of all splitting points of the eigenvalues of $A$ is a nowhere dense closed analytic subset of $X$.\footnote{$Y\subseteq X$ is called a closed analytic subset of $X$ if, for each point $\xi\in X$, there exist a neighborhood $U\subseteq X$ of $\xi$ and holomorphic functions $f_1,\ldots,f_\ell$ on $U$ such that $Y\cap U=\{f_1=\ldots=f_\ell =0\}$. For $N=1$ this means that $Y$ is a closed discrete subset of $X$.}

We call a point $\xi\in X$ {\bf Jordan stable} for $A$ if $\xi$ is not a splitting point of the eigenvalues of $A$ and, moreover, there is a neighborhood $U$ of $\xi$ such that, for each $1\le k\le n$, the number of Jordan blocks of size $k$ in the Jordan normal forms of $A(\zeta)$ is the same for all  $\zeta\in U$ (see Definition \ref{6.2.17} for equivalent conditions). Denote by $\mathrm{Jst\,}A$ the set of all Jordan stable points of $A$.

H. Baumg\"artel  proved that $X\setminus\mathrm{Jst\,}A$ is contained in some  nowhere dense closed analytic subset of $X$, see \cite{B1}, \cite[Kap. V,\S 7]{B2}, \cite[5.7]{B4} for $N=1$, and \cite{B3}, \cite[S 3.4]{B4} for arbitrary $N$.

In the present paper, we give a new proof for Baumg\"artel's theorem, and also for the analyticity of the set of splitting points of the eigenvalues. These proofs have the advantage that the results can be obtained in a more precise form, and with some estimates. For example (Theorem \ref{1.12.16'}):

The set $X\setminus\mathrm{Jst\,}A$ is not only {\em contained} in a nowhere dense closed analytic subset of $X$, but it is itself such a set.
Moreover, there exist finitely many holomorphic functions $f_1,\ldots,f_n:X\to \C$ such that
\[X\setminus\mathrm{Jst\,}A=\big\{f_1=\ldots=f_\ell=0\big\}\] and, for some constants $K,k\in \N^*$  depending only on $n$ (and not on $X$ and $A$),
\[
\big\vert f(\zeta)\big\vert\le K\big(1+\Vert A(\zeta)\Vert\big)^k\quad\text{for all}\quad\zeta\in X.
\]This implies:

 \vspace{2mm}-- If $X$ is the open unit disk in $\C$ and $A$ is bounded, then  $X\setminus\mathrm{Jst\,}A$ satisfies the Blaschke condition.

 -- If $X=\C^N$ and the elements of $A$ are holomorphic polynomials, then $X\setminus\mathrm{Jst\,}A$ is affine algebraic. For $N=1$ this means that $X\setminus\mathrm{Jst\,}A$ is finite.

\vspace{2mm}

If $X$ has a $\Cal C^0$ boundary,   corresponding results are obtained for functions which admit a continuous extension to the boundary of $X$ (Section \ref{4.12.16'}).

Also, our proof applies to arbitrary, possibly non-smooth, complex spaces $X$.

\section{Notation}\label{22.2.15}

$\N$ denotes the set of natural numbers including $0$. $\N^*=\N\setminus\{0\}$.

If $n,m\in \N^*$, then  $\mathrm{Mat}(n\times m,\C)$  denotes the space of complex $n\times m$ matrices ($n$ rows and $m$ columns), and
$\mathrm{GL}(n,\C)$  denotes the  group of invertible elements of $\mathrm{Mat}(n\times n,\C)$.

For $\Phi\in \mathrm{Mat}(n\times m,\C)$, we denote by $\ke \Phi$,  $\im \Phi$, $\rank \Phi$ and $\Vert\Phi\Vert$  the kernel, the image, the rank and the operator norm (as a linear map between the Euclidean spaces $\C^m$ and $\C^n$) of $\Phi$, respectively.

The unit matrix in $\mathrm{Mat}(n\times n,\C)$ will be denoted by $I_n$ or simply by $I$. If $\Phi\in\mathrm{Mat}(n\times n,\C)$ and $\lambda\in\C$, then, instead of  $\lambda I_n-\Phi$ we write also $\lambda -\Phi$.

By a complex space we always mean a {\em reduced} complex space in the sense of, e.g.,  \cite{GR}, which is the same as an {\em analytic} space in the sense of,  e.g.,  \cite{L}.
 For example, each complex manifold and each analytic subset of a complex manifold is a complex space.

By an {\em irreducible} complex space we  mean a {\em globally} irreducible complex space, i.e., a complex space, for which the manifold of smooth points is connected, see, e.g., \cite[Ch. V.4.5]{L} or \cite[Ch. 9, \S 1]{GR}. For example, each connected complex manifold is an irreducible complex space.

If we say ``$\lambda_1,\ldots,\lambda_m$ are the eigenvalues of a matrix'' (or the zeros of a polynomial), then we mean this always  \underline{not counting multiplicities} (hence, then  $\lambda_i\not=\lambda_j$ if $i\not=j$).

\section{Splitting points of the zeros of monic polynomials}\label{7.1.17}

\begin{defn}\label{10.11.16'}
 By a complex polynomial  we  mean  a function $p:\C\to \C$ of the form $p(\lambda)=p_0+p_1\lambda+\ldots+p_n\lambda^n$,
where $n\in\N$ and $p_0,\ldots, p_n\in \C$. If $p_n=1$, $p$ is called {\bf monic}. The map from $\C$ to $\C$ which is identically zero will be called the {\bf zero polynomial}.

 If  $n\in  \N$, then we  denote by $\Cal P_n$ the complex vector space, which consists  of all complex polynomials of degree $\le n$ and the zero polynomial.

Now let $X$ be a topological space, $n\in\N^*$, and $P:X\to \Cal P_n$  a continuous map, all values of which are monic and of degree $n$.
Then
  $\xi\in X$ is called a  {\bf splitting point of the zeros of $P$} if, for each neighborhood $U$ of $\xi$, there exists $\zeta\in U$ such that  $P(\zeta)$ has more zeros than $P(\xi)$ (not counting multiplicities).
\end{defn}

Equivalently, one can define the {\em non-splitting} points, using the following well-known lemma. For completeness, we supply a proof.

\begin{lem}\label{15.1.17} Let $X$ be a topological space, $n\in\N^*$, and $P:X\to \Cal P_n$  a continuous map, all values of which are monic and of degree $n$. Let $\xi\in X$, let $w_1,\ldots,w_m$ be the zeros of $P(\xi)$, and let $n_j$ be the order of $w_j$ as a zero of $P(\xi)$. Then $\xi$ is \underline{not} a splitting point of the zeros of $P$ if and only if the following condition is satisfied:

If $U$ is a sufficiently small connected open neighborhood of $\xi$, then there are uniquely determined  continuous functions $\lambda_1,\ldots,\lambda_m:U\to \C$, which are holomorphic if $X$ is a complex space and $P$ is holomorphic, such that

-- $\lambda_j(\xi)=w_j$ for $1\le j\le m$,

-- for each $\zeta\in U$, $\lambda_1(\zeta),\ldots,\lambda_m(\zeta)$ are the zeros of $P(\zeta)$ \footnote{In particular, $\lambda_i(\zeta)\not=\lambda_j(\zeta)$ if $i\not=j$, according to our convention at the end of Section \ref{22.2.15}.}, and the orders of these zeros are $n_1,\ldots,n_m$, respectively.
\end{lem}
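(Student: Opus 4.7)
The plan is to reduce the statement to a standard application of the argument principle. The ``if'' direction is essentially immediate: if such $\lambda_1,\ldots,\lambda_m$ exist on every sufficiently small connected neighborhood $U$ of $\xi$, then for each $\zeta\in U$ the polynomial $P(\zeta)$ has exactly $m$ distinct zeros, the same number as $P(\xi)$, so $\xi$ cannot be a splitting point.

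For the ``only if'' direction, I would first choose $\varepsilon>0$ so small that the closed disks $\overline{D(w_j,\varepsilon)}\subset\C$ are pairwise disjoint and contain no zero of $P(\xi)$ other than $w_j$; in particular $P(\xi)$ is nowhere zero on the circles $\partial D(w_j,\varepsilon)$. By continuity of $P$, there is a neighborhood $V$ of $\xi$ on which $P(\zeta)$ also has no zero on any of these circles, and on which the argument principle expresses the number of zeros of $P(\zeta)$ inside $D(w_j,\varepsilon)$, counted with multiplicity, as a continuous integer-valued function of $\zeta$ taking the value $n_j$ at $\xi$; hence it is identically $n_j$ on $V$. Since $n_1+\cdots+n_m=n=\deg P(\zeta)$, all zeros of $P(\zeta)$ lie in $\bigcup_j D(w_j,\varepsilon)$. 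Using the non-splitting hypothesis, shrink $V$ to a connected open neighborhood $U$ on which $P(\zeta)$ has exactly $m$ distinct zeros. Since each disk contains at least one zero (the multiplicity count $n_j$ is positive), pigeonhole forces each $D(w_j,\varepsilon)$ to contain exactly one distinct zero, necessarily of multiplicity $n_j$. Define $\lambda_j(\zeta)$ to be this zero. Continuity of $\lambda_j$ at an arbitrary $\zeta_0\in U$ follows by repeating the construction at $\zeta_0$ with smaller disks around $\lambda_j(\zeta_0)$, contained in $D(w_j,\varepsilon)$.

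Uniqueness is the remark that any competing continuous choice $\widetilde\lambda_j$ must, at each $\zeta$, be a permutation of the set $\{\lambda_1(\zeta),\ldots,\lambda_m(\zeta)\}$ of distinct complex numbers; this permutation is locally constant and coincides with the identity at $\xi$, so it equals the identity on the connected set $U$. When $X$ is a complex space and $P$ is holomorphic, holomorphy of $\lambda_j$ follows from the residue formula
\[
\lambda_j(\zeta)=\frac{1}{2\pi i\,n_j}\oint_{\vert\lambda-w_j\vert=\varepsilon}\lambda\,\frac{\partial_\lambda P(\zeta)(\lambda)}{P(\zeta)(\lambda)}\,d\lambda,
\]
whose integrand is jointly continuous in $(\zeta,\lambda)$ and, for fixed $\lambda$, holomorphic in $\zeta$ (locally $X$ embeds into some $\C^N$ on which $P$ extends and the denominator stays nonzero on the circle after a mild shrinking of $U$). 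Differentiation under the integral sign then gives the conclusion. The only genuine technical point, more bookkeeping than obstacle, is to choose $\varepsilon$ and $U$ uniformly so that continuity, uniqueness, and the integral formula all operate on the same connected neighborhood; this is ensured by fixing $\varepsilon$ first and only then selecting $U\subseteq V$ on which $P(\zeta)$ has exactly $m$ distinct zeros.
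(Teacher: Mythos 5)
Your proposal is correct and follows essentially the same route as the paper: localize the zeros in pairwise disjoint disks around the $w_j$, use Rouch\'e/the argument principle to show that $P(\zeta)$ has exactly $n_j$ zeros (with multiplicity) in each disk for $\zeta$ near $\xi$, combine this with the non-splitting bound to get exactly one distinct zero per disk, and obtain continuity and holomorphy from the contour-integral (residue) formula for $\lambda_j(\zeta)$. The only differences are cosmetic: you prove continuity by repeating the localization at an interior point rather than reading it off the integral formula, and you spell out the uniqueness argument (locally constant permutation on a connected $U$), which the paper leaves implicit.
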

\begin{proof}
It is clear that the condition is sufficient.

Assume that $\xi$ is not a splitting point of the zeros of $P$.

Then,  by definition, there is a neighborhood $U$ of $\xi$ such that
\begin{equation}\label{1.2.17}
m\ge \text{ the numbers of zeros of }P(\zeta),\quad\text{ for all } \zeta\in U.
\end{equation}  Choose $\varepsilon>0$  such that the disks
\begin{equation}\label{1.2.17'}\D_j:=\big\{z\in \C\,\big\vert\,\vert z-w_j\vert< \varepsilon\big\}, \quad 1\le j\le m,\end{equation}
are pairwise disjoint.
Since $P$ is continuous and $P(\xi)(z)\not=0$ for $z\in (\partial \D_1\cup\ldots\cup \partial \D_m)$, shrinking $U$,  we can achieve  that $\vert P(\zeta)(z)-P(\xi)(z)\vert<\vert P(\xi)(z)\vert$, for all $\zeta\in U$ and $z\in (\partial \D_1\cup\ldots\cup \partial \D_m)$.  Then it follows from  Rouche's theorem that, for each $\zeta\in U$ and each $1\le j\le m$, \underline{counting} multiplicities, $P(\zeta)$ has exactly $n_j$ zeros in $\D_j$.
Since the disks $\D_1,\ldots,\D_m$ are pairwise disjoint and by \eqref{1.2.17}, this implies:

-- for all $\zeta\in U$ and $1\le j\le m$, $P(\zeta)$ has exactly one zero in $\D_j$,   $\lambda_j(\zeta)$, where $n_j$ is the multiplicity of this zero,

-- for all $\zeta\in U$, $\lambda_k(\zeta)\not=\lambda_j(\zeta)$ if $1\le k,j\le m$ with $k\not=j$,

-- for all $\zeta\in U$,
 $\lambda_1(\zeta),\ldots,\lambda_m(\zeta)$ are the zeros of $P(\zeta)$.

\noindent It remains to prove that the so defined functions $\lambda_1,\ldots,\lambda_m:U\to\C$ are continuous (resp. holomorphic) in $U$.

Let  $\zeta\in U$.
Since $\lambda_j(\zeta)$ is the only zero of $P(\zeta)$ in $\D_j\cup\partial\D$ and the order of this zero is $n_j$,  the function
\[z\longmapsto z\frac{P(\zeta)'(z)}{P(\zeta)(z)},\] where $P(\zeta)'$ is the complex derivative of  $P(\zeta)$, has exactly one singularity in $\D_j\cup\partial\D_j$, namely $\lambda_j(\zeta)$, and the residuum of this singularity is $n_j\lambda_j(\zeta)$. Hence
\begin{equation}\label{2.2.17}
\lambda_j(\zeta)=\frac{1}{n_j2\pi i}\int\limits_{\partial \D_j}z\frac{P(\zeta)'(z)}{P(\zeta)(z)}dz\quad\text{for}\quad 1\le j\le m.
\end{equation}  This formula shows that   $\lambda_1,\ldots,\lambda_m$ are continuous, for $P$ is continuous, and, moreover,  holomorphic if $X$ is a complex space and $P$ is holomorphic.
\end{proof}

The following theorem is contained,  e.g., in the lemma at the beginning of Chapter V, \S 7.1 of \cite{L}, applied to the  projection \[\big\{(\zeta,\lambda)\in X\times \C\,\big\vert\,P(\zeta)(\lambda)=0\big\}\longrightarrow X.\]

\begin{thm}\label{24.11.16}
 Let $X$ be a complex space and let $P:X\to \Cal P_n$ be a holomorphic map, all values of which are of degree $n$ and monic.  Then the splitting points of the zeros of $P$ form a nowhere dense closed analytic subset of $X$.
\end{thm}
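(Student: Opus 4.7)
The plan is to apply the cited lemma from Chapter V, \S 7.1 of \cite{L} to the natural projection $\pi:Y\to X$, where
\[
Y:=\big\{(\zeta,\lambda)\in X\times\C\,\big\vert\,P(\zeta)(\lambda)=0\big\},
\]
and to identify its branch locus with the set of splitting points of the zeros of $P$.

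First I would verify that $\pi$ is a finite surjective holomorphic map of complex spaces. Since $(\zeta,\lambda)\mapsto P(\zeta)(\lambda)$ is holomorphic on $X\times\C$, its zero set $Y$ is a closed analytic subset and hence a complex space. Because $P(\zeta)$ is monic of degree $n\ge 1$ for every $\zeta$, each fiber $\pi^{-1}(\zeta)$ is nonempty and has at most $n$ points, and the standard estimate bounding the zeros of a monic polynomial by its coefficients makes $\pi$ proper. Next, an application of Rouché's theorem to $P(\zeta)$ on pairwise disjoint disks around the distinct zeros of $P(\xi)$ (exactly as in the proof of Lemma \ref{15.1.17}) shows that the counting function $k(\zeta):=\#\pi^{-1}(\zeta)$ satisfies $k(\zeta)\ge k(\xi)$ for all $\zeta$ in a sufficiently small neighborhood of $\xi$; that is, $k$ is lower semi-continuous on $X$.

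Combining this lower semi-continuity with Lemma \ref{15.1.17}, a point $\xi\in X$ is a splitting point of the zeros of $P$ if and only if $k$ fails to be constant in every neighborhood of $\xi$, equivalently, if and only if $\pi^{-1}(\xi)$ has strictly fewer points than a locally generic fiber of $\pi$. The cited lemma from \cite{L} asserts that the set of such branch points of a finite analytic projection forms a nowhere dense closed analytic subset of the target, which gives the conclusion of the theorem. The main subtlety is to interpret ``locally generic fiber'' correctly when $X$ is reducible (one must consider the irreducible components of $X$ through $\xi$ separately), but this bookkeeping is built into the cited lemma. A self-contained alternative would be to stratify $X$ into its irreducible components and, on each, use Vieta's formulas together with the discriminant or subresultants of $P(\zeta)$ and $P(\zeta)'$ to exhibit explicit holomorphic defining equations for the splitting set.
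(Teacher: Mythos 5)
Your proposal is correct, but it is not the proof the paper actually carries out: it is a fleshed-out version of the route the paper only cites, namely the lemma from Chapter V, \S 7.1 of \cite{L} applied to the projection of $\{(\zeta,\lambda)\in X\times\C\mid P(\zeta)(\lambda)=0\}$ onto $X$, together with the (correct) verifications that this projection is a finite proper surjection and that the splitting points are exactly the points where the lower semicontinuous fiber-counting function fails to be locally constant. The paper's own proof, announced as the ``new proof'' and given via Theorem \ref{10.11.16}, avoids the branched-cover machinery entirely: it encodes the number of distinct zeros of $P(\zeta)$ in the rank of the explicit $(2n-1)\times(2n-1)$ matrix $M(\zeta)$ representing $(s,q)\mapsto P(\zeta)s-P(\zeta)'q$ in monomial bases (Lemma \ref{20.2.15}: $\rank M(\zeta)=n-1+\#\{\text{zeros of }P(\zeta)\}$), so that $\mathrm{split\,}P$ is the jump locus of $\rank M$, which on each irreducible component is the common zero set of the nonvanishing top-order minors (Lemma \ref{4.11.16}); reducible $X$ is handled by the global decomposition into irreducible components and the identity $\mathrm{split\,}P=\bigcup_i\big(X_i\cap\mathrm{split\,}(P\vert_{X_i})\big)$. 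What the paper's route buys, and yours does not, is the quantitative content: explicit defining functions $h_j$ that are finite sums of products of the coefficients of $P$ satisfying the uniform bound \eqref{4.1.17}, which is precisely what drives the later Blaschke-condition, algebraicity, and boundary-value applications; what your route buys is brevity and the codimension-one information mentioned in Remark \ref{24.11.16-}. Your closing alternative via discriminants or subresultants is in fact close to the paper's mechanism (the footnote to Lemma \ref{20.2.15} notes that $\pm\det\Phi$ is the discriminant of $p$). The only caveat in your main argument is that the bookkeeping for reducible $X$ (the ``locally generic fiber'' varying with the component) is delegated to the unstated cited lemma rather than argued; the paper does this step explicitly, and if you wanted your write-up self-contained you should too.
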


\begin{rem}\label{24.11.16-}
If $X$ is smooth, there are many sources for this in the literature, see, e.g., \cite[Ch. III, Satz 6.5 and Satz 6.12]{GF}, \cite[Ch. III, Theorems 4.3 and 4.6]{FG}, \cite{B3}, \cite[S3.1]{B4}. There, the fact is used that
$P$ can be written as a finite product
\begin{equation}\label{23.11.16}
P=\omega_1^{r_1}\cdot\ldots\cdot\omega_\ell^{r_\ell},
\end{equation} where $r_i\in \N^*$, each $\omega_i$ is a monic polynomial with coefficients from $\Cal O(X)$ of positive degree\footnote{By that we mean that, for some $k_i\in\N^*$, $\omega_i$ is a holomorphic map from $X$ to $\Cal P_{k_i}$ all values of which are of degree $k_i$ and monic.},
each $\omega_i$ is prime as an element of the monoid of all monic polynomials with coefficients from $\Cal O(X)$,
and $\omega_i\not=\omega_j$ if $i\not=j$. Then it is proved  that  the discriminant of the  polynomial
$\omega_1(\zeta)\cdot\ldots\cdot \omega_\ell(\zeta)$, $\Delta$, does not identically vanish, and $\{\Delta=0\}$ is the set of splitting points of the zeros of $P$.

Note that this proof also shows that the set of splitting points of the zeros of $P$, at each point of this set, is of codimension $1$ in  $X$.
\end{rem}

In this section we give a new proof of Theorem \ref{24.11.16}, which results in a more precise result with estimates. In this proof we do not use the factorization \eqref{23.11.16} (also not for the smooth part of $X$). The main tool of our proof is the following lemma, which is known (see, e.g., \cite[\S 2, 1, VII]{KN} or \cite[Theorem 0.1]{GH}). For completeness, we give a proof.

\begin{lem}\label{20.2.15}\footnote{Formulas \eqref{14.11.16} and \eqref{14.11.16'} below show  that $\pm\det \Phi$ is the  discriminant of $p$ (see, e.g., \cite[\S 35]{vdW}). Therefore, this lemma in particular contains the well-known fact that $p$ has no multiple zeros if and only if its discriminant is different from zero.} Let $p$ be a monic complex polynomial of degree $n$, $n\in\N^*$. Denote by $\Cal P_{-1}$ the space which consists only of the zero polynomial. Let \[\Phi:\Cal P_{n-2}\oplus\Cal P_{n-1}\to\Cal P_{2n-2}\]  be the linear map defined by
\begin{equation*}
\Phi(s,q)=ps-p'q\quad\text{for}\quad (s,q)\in\Cal P_{n-2}\oplus\Cal P_{n-1},
\end{equation*} where $p'$ denotes the complex derivative of $p$. Further, let $m$ be the number of zeros of $p$ (not counting multiplicities). Then
\begin{equation}\label{12.2.15+++}
\rank \Phi=n+m-1.
\end{equation}
\end{lem}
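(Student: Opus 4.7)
The plan is to compute $\dim\ker\Phi$ and invoke rank--nullity, since the domain $\Cal P_{n-2}\oplus\Cal P_{n-1}$ has dimension $(n-1)+n=2n-1$, so showing $\dim\ker\Phi=n-m$ will yield $\rank\Phi=(2n-1)-(n-m)=n+m-1$.

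First I would factor out the greatest common divisor. Write $p=\prod_{j=1}^m(z-w_j)^{n_j}$, so that $d:=\gcd(p,p')=\prod_{j=1}^m(z-w_j)^{n_j-1}$ has degree $n-m$. Put $p=du$ and $p'=dv$, where $u\in\C[z]$ is monic of degree $m$ and $v\in\C[z]$ has degree $m-1$. Since $\C[z]$ is a principal ideal domain, there exist $a,b\in\C[z]$ with $ap+bp'=d$; dividing by $d$ gives $au+bv=1$, so $u$ and $v$ are coprime in $\C[z]$.

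Next I would identify the kernel of $\Phi$ with the space $\Cal P_{n-m-1}$ (with the convention $\Cal P_{-1}=\{0\}$ when $m=n$) via the map $t\mapsto(vt,ut)$. For well-definedness, note that for $t\in\Cal P_{n-m-1}$ one has $\deg(vt)\le(m-1)+(n-m-1)=n-2$ and $\deg(ut)\le m+(n-m-1)=n-1$, and
\[
\Phi(vt,ut)=p\cdot vt-p'\cdot ut=du\cdot vt-dv\cdot ut=0.
\]
Injectivity is clear because $u\not\equiv 0$. For surjectivity onto $\ker\Phi$, let $(s,q)\in\ker\Phi$, so $ps=p'q$, i.e.\ $dus=dvq$, hence $us=vq$. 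Coprimality of $u,v$ forces $u\mid q$; writing $q=ut$ and substituting gives $s=vt$. The constraints $\deg q\le n-1$ and $\deg s\le n-2$ both translate into $\deg t\le n-m-1$, so $t\in\Cal P_{n-m-1}$.

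Finally, $\dim\ker\Phi=\dim\Cal P_{n-m-1}=n-m$, and rank--nullity closes out the proof. The footnote is addressed automatically: the polynomial $p$ has only simple zeros exactly when $m=n$, i.e.\ exactly when $\rank\Phi=2n-1$, i.e.\ exactly when $\Phi$ is invertible, which is detected by $\det\Phi\ne0$. There is no real obstacle here; the only point requiring a touch of care is the degree bookkeeping that verifies both constraints on $t$ collapse to the single bound $\deg t\le n-m-1$, and the boundary case $m=n$ where $\Cal P_{-1}=\{0\}$.
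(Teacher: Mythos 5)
Your proof is correct and takes essentially the same route as the paper: both identify $\ke\Phi$ with $\Cal P_{n-1-m}$ via $t\mapsto(vt,ut)$ (the paper's explicitly written $s_0,q_0$ are exactly your $v=p'/\gcd(p,p')$ and $u=p/\gcd(p,p')$) and then conclude by rank--nullity, $\rank\Phi=(2n-1)-(n-m)$. The only cosmetic difference is in proving the inclusion $\ke\Phi\subseteq\{(vt,ut)\}$: you use coprimality of $u,v$ obtained from a B\'ezout identity for $\gcd(p,p')$, while the paper argues directly that each zero $\lambda_j$ of $p$ must divide $q$ by comparing orders of vanishing in $ps=p'q$.
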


\begin{proof}Let   $\lambda_1,\ldots,\lambda_m$ be the zeros of $p$, and  $k_j$  the order of $\lambda_j$ as a zero of $p$. Since $p$ is of degree $n$ and monic, then $k_1+\ldots+k_m=n$ and
\[
p(\lambda)=(\lambda-\lambda_1)^{k_1}\ldots(\lambda-\lambda_m)^{k_m},\quad \lambda\in\C.
\] Set $q_0(\lambda)=(\lambda-\lambda_1)\ldots(\lambda-\lambda_m)$ and
$
s_0(\lambda)=\sum_{j=1}^mk_j(\lambda-\lambda_1)\ldots\;^{}_{\widehat j}\;\ldots(\lambda-\lambda_m).
$ Then
\iffalse
\[\begin{split}
(ps_0)(\lambda)&=\sum_{j=1}^mk_j(\lambda-\lambda_1)^{k_1+1}\ldots(\lambda-\lambda_{j-1})^{k_{j-1}+1}(\lambda-\lambda_j)^{k_j}(\lambda-\lambda_{j+1})^{k_{j+1}+1}
\ldots(\lambda-\lambda_m)^{k_m+1}\\
&=\Big(\sum_{j=1}^mk_j(\lambda-\lambda_1)^{k_1}\ldots(\lambda-\lambda_{j-1})^{k^{}_{j-1}}(\lambda-\lambda_j)^{k^{}_j-1}(\lambda-\lambda_{j+1})^{k^{}_{j+1}}
\ldots(\lambda-\lambda_m)^{k_m}\Big)q_0(\lambda)\\
&=(p'p_0)(\lambda),
\end{split}\]i.e.,\fi
\begin{equation}\label{12.2.15m}
ps_0=p'q_0.
\end{equation}

Next we prove that
\begin{equation}\label{12.2.15m'}
\ke \Phi=\Big\{(s_0a,q_0a)\,\Big\vert\, a\in\Cal P_{n-1-m}\Big\}.
\end{equation}

{\em Proof of }``$\supseteq$'': For $m=n$ this is trivial. Let $1\le m\le n-1$ and $a\in \Cal P_{n-1-m}$. Since $s_0$ is of degree $m-1$ and $q_0$ of degree $m$,  then $(s_0a,q_0a)\in \Cal P_{n-2}\oplus\Cal P_{n-1}$, and by \eqref{12.2.15m}, $\Phi_p(s_0a,q_0a)=(ps_0-p'q_0)a=0$.

{\em Proof of }``$\subseteq$'': Let $(s,q)\in\ke \Phi$, i.e., $s\in\Cal P_{n-2}$, $q\in \Cal P_{n-1}$ and
 \begin{equation}\label{12.2.15m''}ps=p'q.
  \end{equation}Then each $\lambda_j$ is a zero  of order $\ge k_j$ of $p'q$. Since the order of $\lambda_j$ as a zero of $p'$ is $<k_j$ (for $k_j=1$, by this we mean that $p'(\lambda_j)\not=0$), it follows that each $\lambda_j$ is a zero of $q$. Hence, $q$ is of the form
 \begin{equation}\label{12.2.15-}q=q_0a,
 \end{equation} where $a$ is some complex polynomial (possibly, $a\equiv 0$). If $a\not\equiv 0$, from \eqref{12.2.15-} it follows that $\deg a=\deg q-\deg q_0$.
Since $\deg q_0=m$ and $\deg q\le n-1$, this implies that $\deg a\le n-1-m$, i.e.,
 \begin{equation}\label{21.4.15}a\in \Cal P_{n-1-m}.
 \end{equation}If $a\equiv 0$, \eqref{21.4.15} holds  trivially.
 Moreover, by \eqref{12.2.15m}, \eqref{12.2.15-} and \eqref{12.2.15m''},
\[
ps_0a=p'q_0a=p'q=ps.
\]As $p\not\equiv 0$, this implies that  $s=s_0a$. Together with \eqref{12.2.15-} and \eqref{21.4.15} this proves that $(s,q)$ belongs to the right hand side of \eqref{12.2.15m'}.

So \eqref{12.2.15m'} is proved. Now we consider
 the linear map
 \[\begin{split}\Psi:\Cal P_{n-1-m}&\longrightarrow\Cal P_{n-2}\oplus \Cal P_{n-1}\\
 a\qquad&\longmapsto \quad(s_0a,q_0a).
 \end{split}\] Since $s_0\not\equiv 0$ and $q_0\not\equiv0$, this map  is  injective. Hence
 \[\dim\im \Psi= \dim \Cal P_{n-1-m}=n-m.
 \]As, by \eqref{12.2.15m'}, $\im \Psi=\ke \Phi$, it follows  that
 \[
 \dim \ke\Phi=\dim\im\Psi=n-m.
 \]As $\rank \Phi=2n-1-\dim\ke\Phi$, this proves \eqref{12.2.15+++}.
\end{proof}

Also we use a simple fact on the jump behavior of the rank of a continuous matrix.

\begin{defn}\label{3.11.16}
Let $X$ be a topological space, and $M:X\to \mathrm{Mat}(n\times m,\C)$  a continuous map. A point $\xi\in X$ will be called a {\bf jump point} of $\rank M$ if, for each neighborhood $U$ of $\xi$, there is a point $\zeta\in U$ such that $\rank M(\zeta)>\rank M(\xi)$.

Since the function $X\ni\zeta\mapsto \rank M(\zeta)$ is lower semicontinuous, then $\xi\in X$ is \underline{not} a jump point of $\rank M$ if and only if there is a neighborhood $U$ of $\xi$ such that the map
\[
U\ni\zeta\longmapsto \rank M(\zeta)
\]is constant.
\end{defn}

\begin{lem}\label{4.11.16}Let $X$ be an irreducible complex space,  $M:X\to\mathrm{Mat}(n\times m,\C)$ holomorphic, and
\[
r_{\mathrm{max}}:=\max_{\zeta\in X}\rank M(\zeta).
\]Let $f_1,\ldots,f_\ell$ be the minors of order $r_{\mathrm{max}}$ of $M$. Then
$\{f_1=\ldots=f_\ell=0\}$ is the set of jump points of $\rank M$.
\end{lem}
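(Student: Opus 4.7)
The plan is to reduce everything to two simple facts: that $\rank M(\zeta)\ge r$ if and only if some $r\times r$ minor of $M(\zeta)$ is nonzero at $\zeta$, and that on an irreducible complex space every proper analytic subset is nowhere dense. Set $Z:=\{f_1=\ldots=f_\ell=0\}$. By the minor characterization of rank and by the very definition of $r_{\mathrm{max}}$, the complement $X\setminus Z$ coincides with the (non-empty) set $\{\rank M=r_{\mathrm{max}}\}$, and $Z$ coincides with $\{\rank M<r_{\mathrm{max}}\}$. This reformulation is the only algebraic content of the proof; the remaining work is to match these two sets to the jump/non-jump dichotomy of Definition \ref{3.11.16}.

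For the easy inclusion, I would fix $\xi\notin Z$. Then $f_i(\xi)\ne 0$ for some $i$, so by continuity $f_i$ is nonzero on a neighborhood $U$ of $\xi$; hence $\rank M\ge r_{\mathrm{max}}$ on $U$, and combined with the global bound $\rank M\le r_{\mathrm{max}}$ this gives $\rank M\equiv r_{\mathrm{max}}$ on $U$. By the criterion at the end of Definition \ref{3.11.16}, $\xi$ is not a jump point.

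For the converse, I would fix $\xi\in Z$ and show that every neighborhood of $\xi$ contains a point where the rank is strictly greater than $\rank M(\xi)$. Here the crucial—and the only nontrivial—ingredient, and therefore the main obstacle, is the structural fact that in a globally irreducible complex space every proper analytic subset is nowhere dense; equivalently, every non-empty open subset is dense. This follows from the density of the smooth locus together with the fact that a proper analytic subset of a connected complex manifold is nowhere dense (see, e.g., \cite[Ch.\ V.4.5]{L}). Applied to the non-empty open set $X\setminus Z$, it gives density of $X\setminus Z$ in $X$. Consequently, any neighborhood $U$ of $\xi$ contains some $\zeta\in X\setminus Z$, and there $\rank M(\zeta)=r_{\mathrm{max}}>\rank M(\xi)$; so $\xi$ is a jump point. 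This finishes the proof.
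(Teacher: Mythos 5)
Your argument is correct and rests on the same mechanism as the paper's proof: in both, everything reduces to the observation that the minors $f_j$, being holomorphic and not all identically zero on the irreducible space $X$ (by the definition of $r_{\mathrm{max}}$, via the dense connected smooth locus), cannot all vanish on a non-empty open subset; you package this as density of $X\setminus Z$ and conclude directly that each point of $Z$ is a jump point, whereas the paper argues the contrapositive by contradiction, using lower semicontinuity of $\zeta\mapsto\rank M(\zeta)$ to make the rank locally constant near a non-jump point. One slip to correct: the parenthetical ``equivalently, every non-empty open subset is dense'' is false as stated (the unit disk is a non-empty open, non-dense subset of the irreducible space $\C$); what you actually need, and what your cited fact gives when applied to the proper closed analytic set $Z$ itself rather than to the open set $X\setminus Z$, is that the complement of a proper closed analytic subset of an irreducible complex space is dense, and with that rewording your proof goes through as written.
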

\begin{proof} Let $\xi\in X$ be given.
First assume that $\xi$ is a jump point of $\rank M$. Then, in particular, $\rank M(\xi)<r_{\mathrm{max}}$. Hence $f_1(\xi)=\ldots=f_\ell(\xi)=0$.

Now we assume that $\xi$ is not a jump point of $\rank M$. We have to prove that then $\rank M(\xi)=r_{\mathrm{max}}$. Assume to the contrary that $\rank M(\xi)<r_{\mathrm{max}}$. Since $\xi$ is not a jump point of $\rank M$, then there is a neighborhood $U$ of $\xi$ such that $\rank M(\zeta)\le \rank M(\xi)$ for all $\zeta\in U$. On the other hand, as  $M$ is continuous, there is a neighborhood $V$ of $\xi$ such that $\rank M(\zeta)\ge \rank M(\xi)$ for all $\zeta\in V$. Hence, $\rank M(\zeta)= \rank M(\xi)<r_{\mathrm{max}}$ for all $\zeta\in U\cap V$, which means that $f_1=\ldots=f_\ell=0$ on $U\cap V$.
Since $X$ is irreducible and, hence, the manifold of smooth points of $X$ is connected and everywhere dense in $X$, and since the functions $f_j$ are holomorphic, it follows that $f_1=\ldots=f_\ell=0$ on all of $X$, which is  impossible by  definition of $r_{\mathrm{max}}$.
\end{proof}

Now we are ready to prove Theorem \ref{24.11.16}. Actually, we prove  the more precise

\begin{thm}\label{10.11.16} Let $X$ be a complex space, $n\in \N^*$, and let $P:X\to \Cal P_n$ be  a holomorphic map, all values of which are monic and of degree $n$. Denote by $\mathrm{split\,}P$ the set of splitting points of the zeros of $P$.

Then
$\mathrm{split\,}P$ is a nowhere dense closed analytic subset of $X$.

Moreover,  if
$X$ is irreducible and $\mathrm{split\,}P\not=\emptyset$, and if $P_0(\zeta),\ldots,P_n(\zeta)$ are the coefficients of $P(\zeta)$, then there exist  finitely many holomorphic functions $h_1,\ldots,h_\ell:X\to\C$, each of which is a finite sum of finite products of some of the coefficients  $P_0,\ldots,P_{n-1}$, such that
\begin{equation}\label{7.12.16-}\mathrm{split\,}P=\big\{h_1=\ldots =h_\ell=0\big\},\end{equation}
and
\begin{equation}\label{4.1.17}
\vert h_j(\zeta)\vert\le (2n)^{4n}\max_{0\le \mu\le n-1}\big\vert P_\mu(\zeta)\big\vert^{2n}\quad\text{for all}\quad \zeta\in X\text{ and }1\le j\le \ell.
\end{equation}
\end{thm}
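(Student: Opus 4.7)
The plan is to translate the rank formula of Lemma~\ref{20.2.15} into a statement about jump points of the rank of a holomorphic matrix on $X$, and then invoke Lemma~\ref{4.11.16}. In the monomial bases of $\Cal P_{n-2}$, $\Cal P_{n-1}$ and $\Cal P_{2n-2}$, the linear map $\Phi_{P(\zeta)}:(s,q)\mapsto P(\zeta)s-P(\zeta)'q$ from Lemma~\ref{20.2.15} is represented by a $(2n-1)\times(2n-1)$ Sylvester-type matrix $M(\zeta)$ whose entries are holomorphic in $\zeta$. Each entry of $M(\zeta)$ is either $0$, a constant of modulus $\le n$ (coming from the leading coefficient $P_n\equiv 1$ of $P$, or from $nP_n=n$ in $P'$), or of the form $cP_\mu(\zeta)$ with $0\le\mu\le n-1$ and $|c|\le n$. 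Lemma~\ref{20.2.15} now reads
\[
\rank M(\zeta)=n+m(\zeta)-1,
\]
where $m(\zeta)$ is the number of distinct zeros of $P(\zeta)$, so $\zeta\in\mathrm{split\,}P$ if and only if $\zeta$ is a jump point of $\rank M$ in the sense of Definition~\ref{3.11.16}.

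The first assertion---that $\mathrm{split\,}P$ is a nowhere dense closed analytic subset of $X$---then follows by decomposing $X$ into its irreducible components and applying Lemma~\ref{4.11.16} on each one: locally, the jump set of $\rank M$ is the common zero set of the minors of $M$ of the component-wise maximal order, whose complement is non-empty by the very definition of this maximum.

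For the more precise conclusion, assume $X$ is irreducible and $\mathrm{split\,}P\neq\emptyset$, and take $h_1,\ldots,h_\ell$ to be the minors of $M$ of order $r_{\max}:=\max_{\zeta\in X}\rank M(\zeta)$; by Lemma~\ref{4.11.16} these cut out precisely $\mathrm{split\,}P$. The structural claim---that each $h_j$ is a finite sum of finite products of the coefficients $P_0,\ldots,P_{n-1}$---rests on the following observation. Since $\mathrm{split\,}P\neq\emptyset$, there is some $\zeta$ with $m(\zeta)\ge 2$, so $r_{\max}\ge n+1$. On the other hand, if one formally substitutes $P_0=\cdots=P_{n-1}=0$ in $M$, one obtains the Sylvester matrix for the polynomial $\lambda^n$, which has a unique distinct root and hence rank $n<r_{\max}$; all its order-$r_{\max}$ minors therefore vanish. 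Regarded as a polynomial in $P_0,\ldots,P_{n-1}$, each $h_j$ has vanishing constant term, so it is a finite sum of finite products of these coefficients, as required.

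The estimate \eqref{4.1.17} then comes from a direct entry-wise bound: each $h_j$ is a signed sum of at most $r_{\max}!\le(2n-1)!$ products of $r_{\max}\le 2n-1$ entries of $M$, each bounded in terms of $\max_{0\le\mu\le n-1}|P_\mu(\zeta)|$ and a factor of $n$; absorbing the resulting factorial and powers of $n$ into the single constant $(2n)^{4n}$ gives the claimed inequality. The step where I expect most of the care to be needed is the verification that the order-$r_{\max}$ minors genuinely vanish under the substitution $P_0=\cdots=P_{n-1}=0$ (the ``no constant term'' assertion above), together with the precise combinatorial bookkeeping producing the exponent $2n$; once these are in hand, the rest is a straightforward combination of Lemmas~\ref{20.2.15} and \ref{4.11.16}.
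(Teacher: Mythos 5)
Your proposal is correct and follows essentially the same route as the paper: the same Sylvester-type matrix $M$ representing $(s,q)\mapsto P(\zeta)s-P(\zeta)'q$, the identification of $\mathrm{split\,}P$ with the jump set of $\rank M$ via Lemma~\ref{20.2.15}, Lemma~\ref{4.11.16} applied on the irreducible components, the order-$r_{\mathrm{max}}$ minors as the functions $h_j$, and an entry-wise bound for \eqref{4.1.17}. Your only deviation is the justification that each $h_j$ has vanishing constant term --- substituting $P_0=\cdots=P_{n-1}=0$ and noting that the resulting matrix of $\lambda^n$ has rank $n<r_{\mathrm{max}}$ by Lemma~\ref{20.2.15} --- which is a more explicit (and valid) argument than the paper's brief remark that none of the $h_j$ is zero free.
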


\begin{proof} If $\mathrm{split\,}P=\emptyset$, the claim of the theorem is trivial. Therfore, we may assume that $\mathrm{split\,}P\not=\emptyset$.

First, moreover assume that $X$ is irreducible.

Let $L\big(\Cal P_{n-2}\oplus\Cal P_{n-1},\Cal P_{2n-2}\big)$ be the space a linear maps from $\Cal P_{n-2}\oplus\Cal P_{n-1}$ to $\Cal P_{2n-2}$, and let
\[\Phi:X\to L\big(\Cal P_{n-2}\oplus\Cal P_{n-1},\Cal P_{2n-2}\big)
\] be the holomorphic map defined by
\begin{equation}\label{4.11.16-}\Phi(\zeta)(s,q)=P(\zeta)s-P(\zeta)'q,\qquad \zeta\in X,\quad(s,q)\in\Cal P_{n-2}\oplus\Cal P_{n-1},
\end{equation} where $P(\zeta)'$ is the complex derivative of the polynomial $P(\zeta)$.

For $\lambda\in \C$, we define
\begin{align*}&u_j(\lambda)=\begin{cases}(\lambda^j,0)\quad&\text{for}\quad j=0,\ldots,n-1,\\
(0,\lambda^{j-n})\quad&\text{for}\quad j=n,\ldots, 2n-2,
 \end{cases}\\
 &v_j(\lambda)=\,\,\lambda^j\quad\quad\qquad\quad\,\,\text{for}\quad j=0,\ldots,2n-2.\end{align*}
  Then $u_0,\ldots,u_{2n-2}$ is a basis of $\Cal P_{n-1}\oplus\Cal P_{n-2}$ and $v_0,\ldots,v_{2n-2}$ is a basis of $\Cal P_{2n-2}$. Let $M=(M_{ij})_{i,j=0}^{2n-2}$
 be the corresponding representaion matrix of $\Phi$, i.e.,
 \begin{equation*}
 \Phi(\zeta)u_j=\sum_{i=0}^{2n-2}M_{ij}(\zeta)v_i\quad\text{for}\quad \zeta\in X\text{ and } 0\le j\le 2n-2.
 \end{equation*}
 Then,  by \eqref{4.11.16-}, for  $0\le j\le n-1$, we have
 \[\big(\Phi(\zeta)u_j\big)(\lambda)=\sum_{i=0}^n P_i(\zeta)\lambda^{i+j}=\sum_{i=j}^{n+j}  P^{}_{i-j}(\zeta)v_{i}(\lambda), \] and, for $n\le j\le 2n-2$, we have
\[\big(\Phi(\zeta)u_j\big)(\lambda)=- \sum_{i=1}^n iP_i^{}(\zeta)\lambda^{i-1+j-n}=\sum_{i=j-n}^{j-1}(j-i-n-1)P^{}_{i-j+n+1}(\zeta)v_i(\lambda).
\]
Hence, for $0\le j\le n-1$, we have
\begin{equation}\label{14.11.16}
M_{ij}(\zeta)=\begin{cases}P_{i-j}(\zeta)&\text{if}\quad j\le i\le n+j,\\
0&\text{otherwise},\end{cases}
\end{equation}and, for $n\le j\le 2n-2$, we have
\begin{equation}\label{14.11.16'}
M_{ij}(\zeta)=\begin{cases}(j-i-n-1)P_{i-j+n+1}(\zeta)&\text{if}\quad j-n\le i\le j-1\\
0&\text{otherwise}.\end{cases}
\end{equation}

Let \[r_{\mathrm{max}}:=\max_{\zeta\in X}\rank M(\zeta),\]
and let $h_1,\ldots,h_\ell$ be the minors of order $r_{\mathrm{max}}$ of $M$ which do not vanish identically on $X$ (by definition of $r_{\mathrm{max}}$, there are such minors). Then, by Lemma \ref{4.11.16}, $\{h_1=\ldots=h_\ell=0\}$ is the set of jump points of $\rank M$. Since, by Lemma \ref{20.2.15},
\[
\rank M(\zeta)=\text{ the number of zeros of }P(\zeta) +n-1,\quad \text{for all}\quad \zeta\in X,
\] this proves \eqref{7.12.16-}.

As
the manifold of smooth points of $X$ is a connected ($X$ is irreducible) and dense subset of $X$ and the functions   $h_j$ do not identically vanish on $X$, \eqref{7.12.16-} in particular  shows that $\mathrm{split\,}P$ is a nowhere dense closed analytic subset of $X$.

Since $\mathrm{split\,}P\not=\emptyset$, from  \eqref{7.12.16-} we moreover see that none of the functions $h_j$ is zero free. Therefore, it follows
\eqref{14.11.16} and \eqref{14.11.16'} that each $h_j$ is a finite sum of products of some of the coefficients $P_1,\ldots,P_{n-1}$ (recall that $P_n\equiv 1$) and that
\[
\vert h_j(\zeta)\vert\le r_{\mathrm{max}}^{}!\,\Big( n\max_{0\le \mu\le n-1}\big\vert P_\mu(\zeta)\vert\Big)^{ r_{\mathrm{max}}}\quad\text{for}\quad 1\le j\le \ell,
\]
 which implies
\eqref{4.1.17}.

Now we consider the general case. By the global decomposition theorem for complex spaces (see, e.g., \cite[V.4.6]{L} or \cite[Ch. 9, \S 2.2]{GR}), there is a locally finite covering $\{X_i\}_{i\in I}$ of $X$ such that each $X_i$ is an irreducible closed analytic subset of $X$. Then, clearly,
\begin{equation*}
\mathrm{split\,}P=\bigcup_{i\in I}\Big(X_i\cap \mathrm{split\,}(P\vert_{X_i})\Big),
\end{equation*}
and, as already proved, each $X_i\cap \mathrm{split\,}(P\vert_{X_i})$ is a nowhere dense analytic subset of $X_i$. Since the covering  $\{X_i\}_{i\in I}$ is locally finite, this proves that $\mathrm{split\,}P$ is a nowhere dense analytic subset of $X$.
\end{proof}

\begin{rem}\label{24.11.16--}Our  proof of Theorem \ref{10.11.16} does not show that, at each point of $\mathrm{split\,}P$ which is a smooth point of $X$, $\mathrm{split\,}P$ is of codimension 1 in $X$ (in distinction to the well-known  proof outlined in Remark \ref{24.11.16-}). An advantage of this proof is that it shows that, in the irreducible case,   $\mathrm{split\,} P$ can be defined by finite sums of finite products of the coefficients of $P$, which satisfy estimate \eqref{4.1.17}. This implies, for example:

 -- If  $X=\{\zeta\in\C\,\vert\,\vert\zeta\vert<1\}$ and the coefficients of $P$ are bounded, then  $\mathrm{split\,} P$ satisfies the Blaschke condition.

-- If $X=\C^N$, and the coefficients of $P$ are holomorphic polynomials, then $\mathrm{split\,} P$ is defined by finitely many holomorphic polynomials. For $N=1$ this means that $\mathrm{split\,}P$ is finite (which is well-known from the theory of algebraic functions).

\end{rem}

\section{Splitting points of the eigenvalues of a matrix function}\label{8.1.17}

\begin{defn}\label{24.11.16+}
Let $X$ be a topological space, and $A:X\to \mathrm{Mat}(n\times n;\C)$ continuous.
A point $\xi\in X$ is called a {\bf splitting point of the eigenvalues of $A$} if,  for each neighborhood $U$ of $\xi$, there exists $\zeta\in U$ such that  $A(\zeta)$ has more eigenvalues than $A(\xi)$ (not counting multiplicities).
\end{defn}

Since, for each  $\Phi\in \mathrm{Mat}(n\times n,\C)$, the eigenvalues of  $\Phi$ are the zeros of the  characteristic  polynomial $\det\big(\lambda -\Phi\big)$, $\lambda\in\C$, which is of degree $n$ and monic,  from  Lemma \ref{15.1.17} we immediately obtain the following
characterization of the {\em non-splitting} points of the eigenvalues of a matrix.

\begin{lem}\label{15.1.17'} Let $X$ be a topological space, $n\in\N^*$, and $A:X\to \mathrm{Mat}(n\times n,\C)$  a continuous map. Let $\xi\in X$, let $w_1,\ldots,w_m$ be the eigenvalues of $A(\xi)$, and let $n_j$ be the algebraic multiplicity of $w_j$.\footnote{i.e., the order as a zero of the characteristic polynomial of $A(\xi)$.} Then $\xi$ is \underline{not} a splitting point of the eigenvalues of $A$ if and only if the following condition is satisfied:

If $U$ is a sufficiently small connected open neighborhood of $\xi$, then there are uniquely determined continuous functions $\lambda_1,\ldots,\lambda_m:U\to \C$, which are holomorphic if $X$ is a complex space and $A$ is holomorphic, such that

-- $\lambda_j(\xi)=w_j$ for $1\le j\le m$,

-- for each $\zeta\in U$, $\lambda_1(\zeta),\ldots,\lambda_m(\zeta)$ are the eigenvalues of $A(\zeta)$, where $n_j$ is the algebraic multiplicity $\lambda_j(\zeta)$.
\end{lem}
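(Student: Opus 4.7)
The plan is to reduce Lemma \ref{15.1.17'} to the corresponding statement for monic polynomials, namely Lemma \ref{15.1.17}, via the characteristic polynomial. Define a map $P:X\to\Cal P_n$ by
\[
P(\zeta)(\lambda)=\det\bigl(\lambda I_n-A(\zeta)\bigr),\qquad \zeta\in X,\ \lambda\in\C.
\]
Each coefficient of $P(\zeta)$ is a polynomial in the entries of $A(\zeta)$, so $P$ is continuous, and it is holomorphic whenever $X$ is a complex space and $A$ is holomorphic. By construction every value $P(\zeta)$ is monic of degree $n$, so Lemma \ref{15.1.17} applies to $P$.

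The key observation linking the two settings is that, by the very definition of algebraic multiplicity, the zeros of $P(\zeta)$ (not counting multiplicities) are exactly the eigenvalues of $A(\zeta)$ (not counting multiplicities), and the order of $w$ as a zero of $P(\zeta)$ coincides with the algebraic multiplicity of $w$ as an eigenvalue of $A(\zeta)$. In particular, the number of eigenvalues of $A(\zeta)$ equals the number of zeros of $P(\zeta)$ for every $\zeta$, so that $\xi$ is a splitting point of the eigenvalues of $A$ in the sense of Definition \ref{24.11.16+} if and only if $\xi$ is a splitting point of the zeros of $P$ in the sense of Definition \ref{10.11.16'}.

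Assuming $\xi$ is not a splitting point of the eigenvalues of $A$, Lemma \ref{15.1.17} applied to $P$ yields, on every sufficiently small connected open neighborhood $U$ of $\xi$, uniquely determined continuous (resp.\ holomorphic) functions $\lambda_1,\ldots,\lambda_m:U\to\C$ with $\lambda_j(\xi)=w_j$ such that, for each $\zeta\in U$, the values $\lambda_1(\zeta),\ldots,\lambda_m(\zeta)$ are exactly the zeros of $P(\zeta)$ with orders $n_1,\ldots,n_m$. By the correspondence above, they are then the eigenvalues of $A(\zeta)$ with the required algebraic multiplicities. For the converse, the existence of such functions $\lambda_j$ forces $A(\zeta)$ to have exactly $m$ eigenvalues on $U$, so $\xi$ cannot be a splitting point.

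There is essentially no obstacle beyond verifying this translation, and the only item worth singling out is the easy check that $\zeta\mapsto P(\zeta)$ is a continuous (resp.\ holomorphic) map into the finite-dimensional space $\Cal P_n$, which is immediate because the coefficients of $\det(\lambda I_n-A(\zeta))$ are, up to sign, elementary symmetric functions of the eigenvalues and hence polynomials in the entries of $A(\zeta)$.
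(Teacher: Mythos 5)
Your proposal is correct and takes essentially the same route as the paper: Lemma \ref{15.1.17'} is obtained immediately from Lemma \ref{15.1.17} applied to the characteristic polynomial $P(\zeta)(\lambda)=\det\bigl(\lambda-A(\zeta)\bigr)$, whose coefficients are polynomials in the entries of $A(\zeta)$ and whose zeros, with their orders, are the eigenvalues of $A(\zeta)$ with their algebraic multiplicities. Your writeup just spells out this translation a bit more explicitly than the paper does.
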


\begin{thm}\label{25.11.16} Let $X$ be a complex space and  $A:X\to \mathrm{Mat}(n\times n,\C)$ holomorphic. Denote by $\mathrm{split\,}A$ the set of splitting points of the eigenvalues of $A$.

Then $\mathrm{split\,}A$ is a nowhere dense closed analytic subset of $X$.

 Moreover, if $X$ is irreducible and $\mathrm{split\,}A\not=\emptyset$, then there exist  finitely many holomorphic functions  $h_1,\ldots,h_\ell:X\to \C$, each of which is   a finite sum of finite products of elements of $A$,   such that
\begin{equation}\label{10.1.17} \mathrm{split\,}A=\big\{h_1=\ldots= h_\ell=0\big\},
\end{equation}
and
\begin{equation}\label{10.1.17'}
\vert h_j(\zeta)\vert\le (2n)^{6n^2} \Vert A(\zeta)\Vert^{2n^2}\quad\text{for all}\quad \zeta\in X\text{ and }1\le j\le \ell.
\end{equation}
\end{thm}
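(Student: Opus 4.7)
The plan is to reduce the theorem to its polynomial analogue, Theorem \ref{10.11.16}, by applying it to the characteristic polynomial. Set
\[
P(\zeta)(\lambda) := \det\!\big(\lambda I_n - A(\zeta)\big),\qquad \zeta \in X,\; \lambda \in \C.
\]
By the Leibniz determinant formula, the coefficients $P_0(\zeta),\ldots,P_n(\zeta)$ of $P(\zeta)$ are holomorphic on $X$; for $0\le\mu\le n-1$ each $P_\mu$ is, up to a sign, a finite sum of finite products of entries of $A(\zeta)$, while $P_n\equiv 1$. Thus $P:X\to \Cal P_n$ is a holomorphic map whose values are monic of degree $n$. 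Since the eigenvalues of $A(\zeta)$ coincide with the zeros of $P(\zeta)$, we have $\mathrm{split\,}A = \mathrm{split\,}P$, and the first assertion of the theorem follows at once from Theorem \ref{10.11.16}.

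Assuming now that $X$ is irreducible and $\mathrm{split\,}A\ne\emptyset$, I would apply the refined part of Theorem \ref{10.11.16} to $P$. This produces holomorphic functions $h_1,\ldots,h_\ell:X\to\C$ with $\mathrm{split\,}A = \{h_1=\ldots=h_\ell=0\}$, each $h_j$ a finite sum of finite products of $P_0,\ldots,P_{n-1}$, and satisfying
\[
|h_j(\zeta)| \le (2n)^{4n}\max_{0\le\mu\le n-1}|P_\mu(\zeta)|^{2n}.
\]
Substituting the Leibniz expressions for each $P_\mu$ into $h_j$ immediately yields the structural statement required by \eqref{10.1.17}: each $h_j$ is a finite sum of finite products of entries of $A$.

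To pass to the estimate \eqref{10.1.17'}, I would bound each $|P_\mu|$ by a power of $\|A\|$. Here $P_{n-k}(\zeta) = (-1)^k e_k\!\big(A(\zeta)\big)$, where $e_k(A(\zeta))$ is the sum of the $\binom{n}{k}$ principal $k\times k$ minors of $A(\zeta)$. Since each principal $k\times k$ submatrix is a coordinate compression of $A(\zeta)$, its operator norm is at most $\|A(\zeta)\|$ and hence its determinant has modulus at most $\|A(\zeta)\|^k$. This gives
\[
|P_\mu(\zeta)| \le \binom{n}{n-\mu}\|A(\zeta)\|^{n-\mu} \le 2^n\|A(\zeta)\|^{n-\mu}.
\]
Inserting the dominant term $\mu=0$ into the Theorem \ref{10.11.16} estimate yields $|h_j(\zeta)| \le (2n)^{4n}\cdot 2^{2n^2}\|A(\zeta)\|^{2n^2}$, and the elementary inequality $(2n)^{4n}\cdot 2^{2n^2}\le(2n)^{6n^2}$ (valid for every $n\ge 1$, since $2\le 2n$ and $4n+2n^2\le 6n^2$) yields \eqref{10.1.17'}.

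The proof is essentially a bookkeeping exercise on top of Theorem \ref{10.11.16}; no genuinely new idea is required. The only points warranting care are the verification that the substitution $P_\mu\mapsto$ (polynomial in the entries of $A$) preserves the ``finite sum of finite products'' structure, and the tracking of the combinatorial constants through the composition of the two estimates.
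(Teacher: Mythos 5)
Your argument is essentially the paper's own proof: reduce to Theorem \ref{10.11.16} via the characteristic polynomial $P(\zeta)(\lambda)=\det(\lambda-A(\zeta))$, use $\mathrm{split\,}A=\mathrm{split\,}P$ together with the structure of the $h_j$ as sums of products of the $P_\mu$ (hence of entries of $A$), and then convert the bound on $\max_\mu|P_\mu(\zeta)|$ into a bound in $\Vert A(\zeta)\Vert$; the only difference is that you estimate $|P_\mu(\zeta)|\le\binom{n}{n-\mu}\Vert A(\zeta)\Vert^{n-\mu}$ via principal minors, whereas the paper uses the cruder bound $n!\,\Vert A(\zeta)\Vert^{n}$. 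One shared caveat: your step of ``inserting the dominant term $\mu=0$'' (and likewise the paper's inequality $|P_\mu|\le n!\,\Vert A\Vert^{n}$) is only justified where $\Vert A(\zeta)\Vert\ge 1$, so both arguments actually establish \eqref{10.1.17'} with $\Vert A(\zeta)\Vert$ replaced by $1+\Vert A(\zeta)\Vert$ (equivalently $\max(1,\Vert A(\zeta)\Vert)$), which is the form of the estimate announced in the introduction.
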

\begin{proof} Let $P(\zeta)(\lambda):=\det(\lambda-A(\zeta))$, for $\zeta\in X$ and $\lambda\in \C $, and let $\mathrm{split\,}P$ be the set of splitting points of the zeros of $P$.
Since the eigenvalues of $A$ are the zeros of $P$, then
\[
\mathrm{split\,}A=\mathrm{split\,}P.
\]Therefore, by Theorem \ref{10.11.16}, $\mathrm{split\,}A$ is a nowhere dense analytic subset of $X$.

Now we assume that $X$ is irreducible and $\mathrm{split\,}A\not=\emptyset$. Let  $P_1(\zeta),\ldots,P_n(\zeta)$ be
the coefficients of $P(\zeta)$. Then, again by Theorem \ref{10.11.16}, there exist  finitely many holomorphic functions $h_1,\ldots,h_\ell:Y\to\C$, each of which is a finite sum of finite products of some of the coefficients  $P_0,\ldots,P_{n-1}$ and, hence, a finite sum of finite products of elements of $A$, such that
\begin{equation}\label{14.3.17}\mathrm{split\,}P=\big\{h_1=\ldots =h_\ell=0\big\}
\end{equation}and
\begin{equation}\label{14.3.17'}
\vert h_j(\zeta)\vert\le (2n)^{4n}\max_{0\le \mu\le n-1}\big\vert P_\mu(\zeta)\big\vert^{2n}\quad\text{for all}\quad \zeta\in X\text{ and }1\le j\le \ell.
\end{equation}

Since $\mathrm{split\,}A=\mathrm{split\,}P$, then \eqref{10.1.17} follows from \eqref{14.3.17}.

If $0\le \mu\le n-1$, then $\big\vert P_\mu(\zeta)\big\vert\le n!\,\Vert A(\zeta)\Vert^n$ for all $\zeta\in X$. Therefore
it follows from \eqref{14.3.17'} that
\[
\vert h_j(\zeta)\vert\le  (2n)^{4n}\Big(n!\,\Vert A(\zeta)\Vert^n\Big)^{2n}\quad\text{for all}\quad \zeta\in X\text{ and }1\le j\le \ell,
\]
which  implies \eqref{10.1.17'}.
\end{proof}
\begin{rem}\label{10.1.17-} According to the end of Remark \ref{24.11.16-}, the claim of Theroem \ref{25.11.16} can be completed by the statement that, at  each point of $\mathrm{split\,}A$ which is a  smooth point of $X$,  $\mathrm{split\,}A$ is of codimension 1 in $X$.
\end{rem}

\section{Jordan stable points}\label{7.1.17'}

\begin{defn}\label{24.12.16}As usual, by a {\bf Jordan block}  we mean a matrix of the form $\lambda I_\ell +(\delta_{i,j-1})_{i,j=1}^\ell$, where $\delta_{ij}$ is the Kronecker symbol, $\lambda\in\C$  (the eigenvalue of the Jordan block) and $\ell\in\N^*$ (the size of the Jordan block).

If $\Phi\in\mathrm{Mat}(n\times n,\C)$ and  $\lambda_1,\ldots,\lambda_m$ are the eigenvalues of $\Phi$, then, for  $\ell\in\N^*$, we denote by $\vartheta_\ell\big(\Phi,\lambda_j)$ the number of Jordan blocks of size $\ell$ of the  eigenvalue $\lambda_j$   in the Jordan normal forms of $\Phi$, and  set
\[
\vartheta_\ell\big(\Phi,\bullet)=\sum_{j=1}^m\vartheta_\ell\big(\Phi,\lambda_j).
\]
Further, then we define
\[\Theta_\Phi=
\big(\lambda_1-\Phi\big)\cdot\ldots\cdot\big(\lambda_m-\Phi\big),
\]which is correct,  for the matrices
$\lambda_1-\Phi,\ldots,\lambda_m-\Phi$ pairwise commute.
\end{defn}

\begin{lem}\label{25.12.16}Let $\Phi\in \mathrm{Mat}(n\times n,\C)$,  $\lambda_1,\ldots,\lambda_m$ the eigenvalues of $\Phi$, and  $n_j$  the algebraic multiplicty of $\lambda_j$ (i.e., its order  as a zero of the characteristic polynomial of $\Phi$). Then
\begin{align}&\label{2.1.17}\rank(\lambda_j-\Phi)^k=n-n_j\quad\text{for}\quad k\ge n_j\quad\text{and}\quad 1\le j\le m\\
&\label{29.12.16}
\Theta_\Phi^k=0\quad\text{for}\quad k\ge n,
\end{align}
\begin{multline}\label{25.12.16'}
\rank \Theta_\Phi^k=n-nm+\rank(\lambda_1-\Phi)^k+\ldots+\rank(\lambda_m-\Phi)^k\\
\text{if}\quad 1\le k\le n-1,
\end{multline}
\begin{equation}\label{25.12.16n}
\rank\Theta_\Phi^k=n-\sum_{\ell=1}^{k}\ell
\vartheta_\ell\big(\Phi,\bullet)-k\sum_{\ell=k+1}^{n}\vartheta_\ell\big(\Phi,\bullet)\quad\text{if}\quad 1\le k\le n-1,
\end{equation}
\begin{multline}
\label{25.12.16'''}
\vartheta_k\big(\Phi,\lambda_j)=\rank\,(\lambda_j-\Phi)^{k-1}+\rank(\lambda_j-\Phi)^{k+1}-2\rank\,(\lambda_j-\Phi)^{k}\\\text{if}\quad 1\le  k\le n\text{ and } 1\le j\le m,
\end{multline}where $(\lambda_j-\Phi)^0:=I_n$.
\end{lem}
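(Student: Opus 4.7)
My plan is to diagonalize the problem via the generalized eigenspace decomposition of $\Phi$, which reduces all five identities to elementary facts about powers of a single nilpotent shift. Let $E_j = \ke(\lambda_j-\Phi)^{n_j}$. By the primary decomposition theorem $\C^n = E_1 \oplus \cdots \oplus E_m$, each $E_j$ is $\Phi$-invariant of dimension $n_j$, the restriction $(\lambda_j - \Phi)|_{E_j}$ is nilpotent of index $\le n_j$, and for $i \ne j$ the restriction $(\lambda_i - \Phi)|_{E_j}$ is invertible (its only eigenvalue is $\lambda_i - \lambda_j \ne 0$). All operators appearing in the lemma preserve this decomposition, so ranks add across summands.

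Equation \eqref{2.1.17} is then immediate: for $k \ge n_j$ the operator $(\lambda_j - \Phi)^k$ vanishes on $E_j$ and is invertible on every other $E_i$, so its rank equals $\sum_{i \ne j} n_i = n - n_j$. For \eqref{29.12.16}, note that $\Theta_\Phi|_{E_j}$ is the product of $(\lambda_j - \Phi)|_{E_j}$ (nilpotent of index $\le n_j \le n$) with the commuting invertible factors $(\lambda_i - \Phi)|_{E_j}$, $i \ne j$, hence $\Theta_\Phi^k|_{E_j} = 0$ whenever $k \ge n$. The same idea gives \eqref{25.12.16'}: on $E_j$ the factors $(\lambda_i - \Phi)^k$ with $i \ne j$ are invertible, so $\rank(\Theta_\Phi^k|_{E_j}) = \rank[(\lambda_j - \Phi)^k|_{E_j}]$, while the direct-sum decomposition also yields $\rank(\lambda_j - \Phi)^k = \rank[(\lambda_j - \Phi)^k|_{E_j}] + (n - n_j)$. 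Summing the first over $j$ and substituting the second gives $\rank \Theta_\Phi^k = \sum_j \rank(\lambda_j - \Phi)^k - nm + n$.

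For \eqref{25.12.16n} I would pass to Jordan normal form inside each $E_j$. On a Jordan block of size $\ell$ for $\lambda_j$, $(\lambda_j - \Phi)^k$ is $\pm$ the $k$-th power of the nilpotent shift, whose rank is $\max(\ell - k, 0)$. Summing over blocks gives
\[\rank\bigl[(\lambda_j - \Phi)^k|_{E_j}\bigr] = \sum_{\ell > k}(\ell - k)\,\vartheta_\ell(\Phi, \lambda_j);\]
summing over $j$ and using $\sum_{\ell \ge 1} \ell\,\vartheta_\ell(\Phi, \bullet) = n$ rearranges into the stated form. Finally, \eqref{25.12.16'''} comes from a discrete second difference of $r_k := \rank(\lambda_j - \Phi)^k$: from the same block calculation, $r_{k+1} - r_k = -\sum_{\ell > k}\vartheta_\ell(\Phi, \lambda_j)$ for $k \ge 1$, together with $r_0 - r_1 = \dim\ke(\lambda_j - \Phi) = \sum_{\ell \ge 1}\vartheta_\ell(\Phi, \lambda_j)$; subtraction gives $r_{k-1} + r_{k+1} - 2r_k = \vartheta_k(\Phi, \lambda_j)$ for all $1 \le k \le n$.

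The step I expect to carry the real weight is not any single formula but the initial bookkeeping: on $E_j$ exactly one factor of $\Theta_\Phi$ is nilpotent and the rest are invertible, and this asymmetry must be tracked carefully to separate the contribution of $E_j$ from that of $\bigoplus_{i \ne j}E_i$. Once that is done systematically for \eqref{25.12.16'}, the remaining identities collapse to the elementary rank formula $\max(\ell - k, 0)$ for powers of the nilpotent shift.
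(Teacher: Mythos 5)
Your proposal is correct and follows essentially the same route as the paper: both rest on the generalized eigenspace decomposition $\C^n=E_1\oplus\cdots\oplus E_m$, the fact that $\lambda_i-\Phi$ is nilpotent on $E_j$ for $i=j$ and invertible for $i\not=j$, and the elementary rank/kernel computation for powers of a Jordan block, with \eqref{25.12.16'''} obtained as a second difference. The only cosmetic difference is that the paper does the bookkeeping with kernel dimensions (via $\ke\Theta_\Phi^k=\ke(\lambda_1-\Phi)^k\oplus\cdots\oplus\ke(\lambda_m-\Phi)^k$) while you track ranks of restrictions, which are equivalent by rank--nullity.
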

For completeness, we give a proof of this lemma, although
the relations collected there (and in its proof) are well-known, possibly, in somewhat different formulations, see, e.g.,  \cite[Kap. II, \S 8.4]{B2} or \cite[2.9.4]{B4}.
\begin{proof}
First note that, if, for some $1\le j\le m$, $J$ is a Jordan block of size $\ell$ and with eigenvalue $\lambda_j$, then
\begin{equation}\label{26.11.16}\begin{split}&\rank \,(\lambda_j-J)^k=\ell-k\quad\text{for}\quad 0\le k\le \ell-1,\\
&(\lambda_j-J)^\ell=0,\\
&\lambda_i-J\in \mathrm{GL}(\ell,\C)\quad\text{for all}\quad 1\le i\le m\text{ with }i\not=j.
\end{split}\end{equation}
Denote by $E_j$  the algebraic eigenspace of $\lambda_j$, i.e., $E_j:=\ke(\lambda_j-\Phi)^{n_j}$.
Then each $E_j$ is an invariant subspace of each $\lambda_i-\Phi$, and, since $\Phi$ is similar to a matrix in Jordan normal form,  it  follows from \eqref{26.11.16} that
\begin{equation}\label{29.12.16'''}
\C^n=E_1\oplus\ldots\oplus E_m,\quad\text{and}\quad n_j=\dim E_j\quad\text{for}\quad 1\le j\le m,
\end{equation}$\lambda_i-\Phi$ maps $E_j$ isomorphically onto itself if $i\not=j$,
\begin{equation}\label{29.12.16'}
\ke(\lambda_j-\Phi)^k=E_j\quad\text{for}\quad k\ge n_j\quad\text{and}\quad1\le j\le m,
\end{equation}
\begin{multline}\label{25.12.16**}
\dim\ke(\lambda_j-\Phi)^k=\sum_{\ell=1}^{k}\ell
\vartheta_\ell\big(\Phi,\lambda_j)+k\sum_{\ell=k+1}^{n_j}\vartheta_\ell\big(\Phi,\lambda_j)\\\text{for }1\le j\le m\text{ and } k\in \N^*,
\end{multline} and (taking into account that the matrices
$\lambda_j-\Phi$ pairwise commute), for all $k\in\N^*$,
\begin{align}&\label{29.12.16''}
\ke \Theta_\Phi^k=\ke(\lambda_1-\Phi)^k\oplus\ldots\oplus \ke(\lambda_m-\Phi)^k,\\
&\label{25.12.16*}
\dim\ke \Theta_\Phi^k=\dim\ke(\lambda_1-\Phi)^k+\ldots+ \dim\ke(\lambda_m-\Phi)^k.
\end{align}
From \eqref{25.12.16**} and \eqref{25.12.16*} together, we obtain
\begin{equation}\label{25.12.16***}
\dim\ke \Theta_\Phi^k=\sum_{\ell=1}^{k}\ell
\vartheta_\ell\big(\Phi,\bullet)+k\sum_{\ell=k+1}^{n}\vartheta_\ell\big(\Phi,\bullet),\quad k\in\N^*.
\end{equation}

Now: \eqref{2.1.17} follows from \eqref{29.12.16'''} and \eqref{29.12.16'};
\eqref{29.12.16} follows from \eqref{29.12.16'''}, \eqref{29.12.16'} and \eqref{29.12.16''};
\eqref{25.12.16'} follows from  \eqref{25.12.16*}; \eqref{25.12.16n} follows from \eqref{25.12.16***}.

To prove \eqref{25.12.16'''}, we first note that \eqref{25.12.16**} holds also for  $k=0$ -- then both sides are zero. Hence, for  $k\in\N^*$ and $1\le j\le m$,
\begin{equation*}\begin{split}
\dim&\ke (\lambda_j-\Phi)^k-\dim\ke(\lambda_j-\Phi)^{k-1}\\
&=\bigg(\sum_{\ell=1}^{k}\ell
\vartheta_\ell\big(\Phi,\lambda_j)-\sum_{\ell=1}^{k-1}\ell
\vartheta_\ell\big(\Phi,\lambda_j)\bigg)+\bigg(k\sum_{\ell=k+1}^{n_j}\vartheta_\ell\big(\Phi,\lambda_j)-(k-1)
\sum_{\ell=k}^{n_j}\vartheta_\ell\big(\Phi,\lambda_j)\bigg)\\
&=k\vartheta_k\big(\Phi,\lambda_j)+\sum_{\ell= k+1}^{n_j}\vartheta_\ell\big(\Phi,\lambda_j)-(k-1)\vartheta_k(\Phi,\lambda_j)=\sum_{\ell= k}^{n_j}\vartheta_\ell\big(\Phi,\lambda_j)
\end{split}\end{equation*} and, therefore,
\[
\vartheta_k\big(\Phi,\lambda_j)=2\dim\ke (\lambda_j-\Phi)^k-\dim\ke(\lambda_j-\Phi)^{k-1}-\dim\ke (\lambda_j-\Phi)^{k+1},
\]
 which implies \eqref{25.12.16'''}.
\end{proof}

\begin{lem}\label{24.12.16''}  Let $X$ be a topological space,  $A:X\to \mathrm{Mat}(n\times n,\C)$ a continuous map, and $\xi$ a point in $X$ which is \underline{not} a splitting point of the eigenvalues of $A$, and let $w_1,\ldots,w_m$ be the eigenvalues of $A(\xi)$, with the  algebraic multiplicities $n_1,\ldots,n_m$, respectively. Take a sufficiently small connected open neighborhood $U$ of $\xi$ such that (by Lemma \ref{15.1.17'}) there are uniquely determined continuous functions $\lambda_1,\ldots,\lambda_m:U\to \C$, which are holomorphic if $X$ is a complex space and $A$ is holomorphic, such that

-- $\lambda_j(\xi)=w_j$ for $1\le j\le m$, and

\iffalse -- for each $\zeta\in U$, $\lambda_k(\zeta)\not=\lambda_j(\zeta)$ if $1\le k,j\le m$ with $k\not= j$,\fi

-- for each $\zeta\in U$, $\lambda_1(\zeta),\ldots,\lambda_m(\zeta)$ are the eigenvalues of $A(\zeta)$, where $n_j$ is the  algebraic multiplicity of $\lambda_j(\zeta)$.

Then  the following conditions are equivalent.

{\em (i)}  There exists a neighborhood $V\subseteq U$ of $\xi$ such that the map
\begin{equation}\label{25.12.16+}
V\ni\zeta\longmapsto \vartheta_\ell\big(A(\zeta),\lambda_j(\zeta)\big)
\end{equation}is constant if  $1\le j\le m$ and $1\le\ell\le n$.

{\em (ii)} There exists a neighborhood $V\subseteq U$ of $\xi$ such that the map
\begin{equation}\label{25.12.16+neu}
V\ni\zeta\longmapsto \vartheta_\ell\big(A(\zeta),\bullet\big)
\end{equation}is constant if $1\le\ell\le n$.

{\em (iii)} There exists a neighborhood $V\subseteq U$ of $\xi$ such that the map
\begin{equation}\label{24.12.16-}
V\ni\zeta\longmapsto \rank \big(\Theta_{A(\zeta)}\big)^k
\end{equation}is constant if $1\le k\le n-1$.

{\em (iv)} There exists a neighborhood $V\subseteq U$ of $\xi$ such that the map
\begin{equation}\label{24.12.16-neu}
V\ni\zeta\longmapsto \rank \big(\lambda_j(\zeta)-A(\zeta)\big)^k
\end{equation}is constant if $1\le k\le n-1$.

{\em (v)} There exists a neighborhood $V\subseteq U$ of $\xi$ and a continuous map $T:V\to \mathrm{GL}(n,\C)$, which is holomorphic if $X$ is a complex space and $A$ is holomorphic, such that $T(\zeta)^{-1}A(\zeta)T(\zeta)$ is in Jordan normal form for all $\zeta\in V$.
\end{lem}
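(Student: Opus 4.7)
The plan is to establish the cycle (v) $\Rightarrow$ (i) $\Rightarrow$ (ii) $\Rightarrow$ (iii) $\Rightarrow$ (iv) $\Rightarrow$ (i), giving (i)--(iv) all equivalent, and then close with the substantive direction (i) $\Rightarrow$ (v). The implications among (i)--(iv) are essentially bookkeeping with the rank/multiplicity identities of Lemma \ref{25.12.16} combined with the lower semicontinuity of matrix rank.

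In detail: (i) $\Rightarrow$ (ii) is just summation over $j$. (ii) $\Rightarrow$ (iii) is immediate from \eqref{25.12.16n}, which writes $\rank\Theta_\Phi^k$ as an integer combination of the $\vartheta_\ell(\Phi,\bullet)$'s. For (iii) $\Rightarrow$ (iv), formula \eqref{25.12.16'} gives on $U$
\[
\rank\Theta_{A(\zeta)}^k = n - nm + \sum_{j=1}^m \rank\big(\lambda_j(\zeta)-A(\zeta)\big)^k,
\]
with $m$ constant on $U$ since $\xi$ is not a splitting point; each summand on the right is lower semicontinuous in $\zeta$, hence bounded below near $\xi$ by its value at $\xi$, so if the total is locally constant and equal to its value at $\xi$, each summand must be locally constant as well. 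Then (iv) $\Rightarrow$ (i) follows from \eqref{25.12.16'''}, with the observation that $\rank(\lambda_j(\zeta)-A(\zeta))^k = n - n_j$ is constant on $U$ for $k \ge n_j$ by \eqref{2.1.17}. Finally, for (v) $\Rightarrow$ (i), the matrix $B(\zeta) := T(\zeta)^{-1}A(\zeta)T(\zeta)$ is continuous on $V$ and in Jordan normal form at every $\zeta$, so its superdiagonal entries take values in $\{0,1\}$; after shrinking $V$ to be connected, each such entry is therefore constant, which fixes the block sizes, and the diagonal entries are the continuous functions $\lambda_j(\zeta)$ with their correct algebraic multiplicities.

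The main work is (i) $\Rightarrow$ (v). Using the already proved (i) $\Leftrightarrow$ (iv), the ranks $\rank(\lambda_j(\zeta)-A(\zeta))^k$ are locally constant, so after shrinking $U$ the kernels $K_{j,k}(\zeta) := \ke(\lambda_j(\zeta)-A(\zeta))^k$ are continuous (respectively holomorphic) subbundles of the trivial bundle $U \times \C^n$. Setting $E_j(\zeta) := K_{j,n_j}(\zeta)$, the decomposition $\C^n = \bigoplus_j E_j(\zeta)$ from \eqref{29.12.16'''} is realized continuously/holomorphically by the spectral projections
\[
P_j(\zeta) = \frac{1}{2\pi i}\oint_{\gamma_j}\big(z - A(\zeta)\big)^{-1}\,dz,
\]
where $\gamma_j$ is a fixed small loop enclosing only $\lambda_j(\zeta)$. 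On each $E_j$ the operator $N_j(\zeta) := (\lambda_j(\zeta)-A(\zeta))|_{E_j(\zeta)}$ is nilpotent with Jordan partition locally constant by (i). The fiberwise Jordan-basis algorithm then requires only choosing, for $k$ descending from $n_j$ to $1$, complements of $K_{j,k-1}(\zeta) + N_j(\zeta)K_{j,k+1}(\zeta)$ inside $K_{j,k}(\zeta)$ and propagating each chosen vector under $N_j$. All dimensions involved are integer combinations of the locally constant $\vartheta_\ell$'s, so the subspaces in question are subbundles, and concatenating the resulting Jordan chains over $j$ gives the columns of the desired $T(\zeta)$.

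The main obstacle is precisely this last step: ensuring that the splittings implicit in the Jordan-basis construction can be chosen continuously (respectively holomorphically) in a neighborhood of a possibly singular point $\xi \in X$. In both settings this reduces to the local splittability of short exact sequences of vector bundles on a complex space, which is classical in the continuous case and can be arranged in the holomorphic case on a sufficiently small Stein-type neighborhood of $\xi$.
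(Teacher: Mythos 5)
Your handling of the equivalences (i)--(iv) matches the paper's: (ii)$\Rightarrow$(iii) via \eqref{25.12.16n}, (iii)$\Rightarrow$(iv) via \eqref{25.12.16'} plus lower semicontinuity of the rank, (iv)$\Rightarrow$(i) via \eqref{25.12.16'''} (your remark that \eqref{2.1.17} supplies the ranks for $k\ge n_j$ is a correct refinement), and your argument for (v)$\Rightarrow$(i) (constancy of the $0/1$ superdiagonal on a connected neighborhood, plus matching block eigenvalues with the distinct continuous functions $\lambda_j$) is a legitimate elaboration of what the paper dismisses as clear. Where you genuinely diverge is (i)$\Rightarrow$(v). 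The paper follows Wasow: it writes down the Jordan model $J(\zeta)$ explicitly (eigenvalue functions $\lambda_j(\zeta)$ plus fixed nilpotent parts, legitimate by (i)), considers the Sylvester-type family $\varphi(\zeta)\Phi=\Phi A(\zeta)-J(\zeta)\Phi$, shows $\dim\ke\varphi(\zeta)$ is constant by Gantmacher's description of the commutant of a Jordan matrix, invokes \emph{once} the fact (Wasow, Shubin) that a constant-dimensional kernel family is a continuous/holomorphic subbundle, and takes a local section through the invertible intertwiner at $\xi$; invertibility persists after shrinking and $T=S^{-1}$. Your route---spectral projections, the filtration $\ke(\lambda_j(\zeta)-A(\zeta))^k$ as subbundles, and a fiberwise Jordan-chain construction with chosen complements---is the direct construction and can be made to work, but it uses the same kind of subbundle input many times (kernels, images under $N_j$, sums of these) and in addition needs continuously/holomorphically varying complements near a possibly singular point of $X$. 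That last step is where your sketch is weakest, and the reduction you propose slightly misses the point: the issue is not splitting short exact sequences on Stein neighborhoods, but first establishing that these constant-dimensional families are locally trivial (admit local holomorphic frames) over a possibly singular base; for kernels, images and sums of constant-rank holomorphic bundle maps this follows from the Schur-complement argument or from the same Wasow/Shubin results the paper cites, and once local frames exist a complement is obtained simply by completing a frame at the single fiber over $\xi$, with no cohomological input. So your proposal is essentially correct but heavier: the paper's trick buys a single application of the subbundle lemma and only a section through one point, with the constancy of the kernel dimension reduced to a commutant computation, while your construction is more explicit about the Jordan basis at the price of carrying the bundle-theoretic bookkeeping (including the routine verification that the concatenated chains from the chosen complements form a basis) through every level of the filtration.
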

If $X$ is a domain in $\C$ and $A$ is holomorphic,  the equivalence of conditions (i), (ii) and (v)  is due to  G. P. A. Thiesse \cite{T}.

\begin{proof} The equivalence of (i) - (iv) follows from Lemma \ref{25.12.16}. Indeed:

(i) $\Rightarrow$ (ii) is trivial.

(ii) $\Rightarrow$ (iii) follows from \eqref{25.12.16n}.

To prove that (iii) $\Rightarrow$ (iv), we note that, by \eqref{25.12.16'},
\[\rank \big(\Theta_{A(\zeta)}\big)^k=\mathrm{const}+\rank\big(\lambda_1(\zeta)-A(\zeta)\big)^k+\ldots+\rank\big(\lambda_m(\zeta)-A(\zeta)\big)^k
\]for all $\zeta\in U$ and $1\le k\le n-1$, and observe that the functions on the right hand side of this relation are lower semicontinuous in $\zeta$ (since the rank of a continuous matrix function is always lower semicontinuous). If  the left hand side is constant for $\zeta$  in some neighborhood $V$ of $\xi$, this is possible only if also  the functions on the right hand side are constant for $\zeta\in V$.

(iv) $\Rightarrow$ (i) follows from   \eqref{25.12.16'''}.

Moreover, it is clear that  (v) $\Rightarrow$ (i). To complete the proof of the lemma, therefore is is sufficient to prove that (i) $\Rightarrow$ (v).

Assume (i) is satisfied.

For each $1\le j\le m$, choose a matrix $N_j\in \mathrm{Mat}(n_j\times n_j,\C)$, which is a block diagonal matrix with  Jordan blocks on the diagonal, each of which has the eigenvalue $0$, and such that, for each $\ell\in\N^*$, exactly $\vartheta_\ell\big(A(\xi),\lambda_j(\xi)\big)$ of them are of size $\ell$. Since the algebraic multiplicity of $\lambda_j(\xi)$ is $n_j$, then $N_j$ is and $n_j\times n_j$ matrix. Hence, for each $1\le j\le m$ and each $\zeta\in V$,   $\lambda_j(\zeta)I_{n_j}+N_j$ is a Jordan block with eigenvalue $\lambda_j(\zeta)$ and of size $n_j$. Let $J:U\to\mathrm{Mat}(n\times n,\C)$ (note that $n_1+\ldots+n_m=n$) be the map such that $J(\zeta)$, $\zeta\in U$, is the block diagonal matrix with the diagonal
\[
\lambda_1(\zeta)I_{n_1}+N_1,\ldots,\lambda_m(\zeta)I_{n_m}+N_m.
\] Since the functions $\lambda_j$ are continuous, and holomorphic if $A$ is holomorphic, then
$J$ is continuous, and holomorphic  if $A$ is holomorphic. Moreover, since condition (i) is satisfied, there is a neighborhood $V\subseteq U$ of $\xi$ such that, \[\vartheta_\ell\big(A(\zeta),\lambda_j(\zeta)\big)=\vartheta_\ell\big(A(\xi),\lambda_j(\xi)\big)\quad \text{for all}\quad \zeta\in V,\]
which means that, for each $\zeta\in V$, $J(\zeta)$ is a Jordan normal form of $A(\zeta)$, i.e.,   there exists a matrix $\Theta_\zeta\in \mathrm{GL}(n,\C)$ with
\begin{equation}\label{5.12.16}\Theta^{}_\zeta J(\zeta)\Theta^{-1}_\zeta =A(\zeta).
\end{equation}

Now  let $\End\big(\mathrm{Mat}(n\times n,\C)\big)$ be the space of linear endomorphisms of the complex vector space $\mathrm{Mat}(n\times n,\C)$. Following an idea of W. Wasow \cite{W},  we consider the continuous (and holomorphic if $A$ is holomorphic) map  $\varphi:V\to \End\big(\mathrm{Mat}(n\times n,\C)\big)$ defined by
\[
\varphi(\zeta)\Phi= \Phi A(\zeta)-J(\zeta)\Phi,\qquad \zeta\in V,\quad \Phi\in \mathrm{Mat}(n\times n,\C).
\]
We claim that the map
\begin{equation}\label{27.12.16''}
V\ni\zeta\longmapsto \dim\ke\varphi(\zeta)
\end{equation}is constant.
Indeed, by definition of $\varphi$ and by \eqref{5.12.16}, for all $\zeta\in V$,
\[\begin{split}
\ke \varphi(\zeta)&=\Big\{\Phi\in\mathrm{Mat}(n\times n,\C)\;\Big\vert\; \Phi A(\zeta)=J(\zeta)\Phi\Big\}\\
&=\Big\{\Phi\in\mathrm{Mat}(n\times n,\C)\;\Big\vert\; \Phi\Theta^{}_\zeta J(\zeta)=J(\zeta)\Phi \Theta_\zeta^{}\Big\}\\
&=\Big\{\Phi\in\mathrm{Mat}(n\times n,\C)\;\Big\vert\; \Phi J(\zeta)=J(\zeta)\Phi\Big\}\Theta_\zeta^{-1}.\end{split}\]
In particular, for all $\zeta\in V$,
 \begin{equation}\label{27.12.16}\dim\ke \varphi(\zeta)=\dim\Big\{\Phi\in \mathrm{Mat}(n\times n,\C)\;\Big\vert\;\Phi J(\zeta)=J(\zeta)\Phi\Big\}.
 \end{equation}
  Since $\lambda_i(\zeta)\not=\lambda_j(\zeta)$ if $i\not=j$,  it follows from \cite[Ch. VIII, \S 1]{Ga} that, for all $\zeta\in W$, a matrix belongs to the space on the right hand side of \eqref{27.12.16} if and only if it is a
 block diagonal matrix with a diagonal of the form $\Lambda_1,\ldots,\Lambda_m$, where $\Lambda_j$ belongs to the space
 \begin{multline*}
 \Big\{\Phi\in \mathrm{Mat}(n_j\times n_j,\C)\;\Big\vert\;\Phi \big(\lambda_j(\zeta)+N_j\big)=\big(\lambda_j(\zeta)+N_j\big)\Phi\Big\}\\
 =\Big\{\Phi\in \mathrm{Mat}(n_j\times n_j,\C)\;\Big\vert\;\Phi N_j=N_j\Phi\Big\}.
 \end{multline*} Since the latter space
  is independent of $\zeta$, this means that \eqref{27.12.16''} is constant.

Since $\varphi$ is continuous, and holomorphic if  $A$ is holomorphic, the constancy of \eqref{27.12.16''} means that the family $\{\ke\varphi(\zeta)\}_{\zeta\in V}$ is a sub-vector bundle of the product bundle $W\times \mathrm{Mat}(n\times n,\C)$, which is  holomorphic if $A$
is holomorphic (see, e.g., \cite[Lemma 1]{W} or  \cite[Corollary 2]{Sh}).

Therefore, through each point in this sub-vector bundle goes a local continuous (resp. holomorphic) section. Since, by \eqref{5.12.16}, $(\xi,\Theta_\xi^{-1})$ is such a point, it follows that there is a neighborhood $V$ of $\xi$ and a continuous (resp. holomorphic) map $S:V\to\mathrm{Mat}(n\times n,\C)$ with
$S(\xi)=\Theta_\xi^{-1}$ and $S(\zeta)A(\zeta)=J(\zeta)S(\zeta)$ for all $\zeta\in V$. Since $\Theta_\xi^{-1}$ is invertible,  shrinking $V$, we may achieve that moreover $S(\zeta)\in \mathrm{GL}(n,\C)$ for all $\zeta\in V$. It remains to set $T(\zeta)=S(\zeta)^{-1}$ for $\zeta\in V$.
\end{proof}
\begin{defn}\label{6.2.17}
Let $X$ be a topological space, and $A:X\to \mathrm{Mat}(n\times n,\C)$ a continuous map. A point $\xi\in X$ is called {\bf Jordan stable} for $A$ if $\xi$ is not a splitting point of the eigenvalues of $A$ and  and the equivalent conditions (i) - (v) in Lemma \ref{24.12.16''} are satisfied.
\end{defn}

If $G$ is a domain in some $\C^N$ and $A:G\to\mathrm{Mat}(n\times n,\C)$ is holomorphic, H. Baumg\"artel  proved that there exists a nowhere dense analytic subset $B$ of $G$, which contains the splitting points of $A$,  such that all points of $G\setminus B$ are Jordan stable for $A$ (he proved that condition (v) in Lemma \ref{24.12.16''} is satisfied), see \cite{B1}, \cite[Kap. V, \S 7]{B2} and \cite[5.7]{B4} if $N=1$, and \cite{B3} and \cite[S 3.4]{B4} for arbitrary $N$.

\vspace{2mm}

In the present section we give a new proof of Baumg\"artel's theorem, which gives the following  more precise and more general

\begin{thm}\label{1.12.16'} Let $X$ be a complex space, and $A:X\to \mathrm{Mat}(n\times n,\C)$ holomorphic. Denote by $\mathrm{Jst\,}A$ the set of Jordan stable points of $A$.

Then $X\setminus \mathrm{Jst\,}A$ is a nowhere dense closed analytic subset of $X$.

Moreover, if $X$ is irreducible and normal\footnote{For the definition of a {\em normal} complex space, see, e.g., \cite[Ch. VI, \S 2]{L}. For example, each complex manifold is normal}, and if $\mathrm{Jst\,}A\not=X$, then there exist  finitely many holomorphic functions
$h_1,\ldots,h_\ell:X\to \C$, each of which is  a finite sum of finite products of  elements of $A$,   such that
\begin{equation}\label{6.12.16-}X\setminus\mathrm{Jst\,}A=\big\{h_1=\ldots= h_\ell=0\big\}\end{equation}
and
\begin{equation}\label{6.12.16--}
\vert h_j(\zeta)\vert\le (2n)^{2n^4}\Vert A(\zeta)\Vert^{2n^4}\quad\text{for all}\quad \zeta\in X\text{ and }1\le j\le \ell.
\end{equation}
\end{thm}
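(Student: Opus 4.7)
The plan is to combine three tools developed earlier in the paper: Theorem~\ref{25.11.16} for the splitting points, Lemma~\ref{24.12.16''} to characterize Jordan stability via constancy of the ranks $\rank(\Theta_{A(\zeta)})^k$, and Lemma~\ref{4.11.16} to realize the rank-jump loci as the common zero set of minors. This gives the analyticity statement; the quantitative refinement then follows from irreducibility and normality via an extension argument.

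\textbf{Stage 1 (Analyticity).} By Definition~\ref{6.2.17}, $\mathrm{split\,}A\subseteq X\setminus\mathrm{Jst\,}A$, so with $Y:=X\setminus\mathrm{split\,}A$ we have $X\setminus\mathrm{Jst\,}A=\mathrm{split\,}A\cup(Y\setminus\mathrm{Jst\,}A)$, and Theorem~\ref{25.11.16} handles the first summand. For the second, fix $\xi\in Y$; Lemma~\ref{15.1.17'} provides a connected open neighborhood $U\subseteq Y$ on which the distinct eigenvalues $\lambda_1,\ldots,\lambda_m$ of $A$ are holomorphic with constant algebraic multiplicities. By Lemma~\ref{24.12.16''}(iii), $\zeta\in U$ is Jordan stable iff $\zeta$ is not a jump point of any of the rank functions $r_k(\zeta)=\rank(\Theta_{A(\zeta)})^k$, $1\le k\le n-1$. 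Applying Lemma~\ref{4.11.16} to each holomorphic matrix function $\Theta_A^k$ on an irreducible component of $U$ (furnished by the local decomposition theorem), the local jump set equals the common zero set of the minors of $\Theta_A^k$ of maximal rank. Gluing, $Y\setminus\mathrm{Jst\,}A$ is closed analytic in $Y$, and together with $\mathrm{split\,}A$ it gives a closed analytic subset of $X$ (using that $\mathrm{Jst\,}A$ is open). Nowhere-denseness follows because in any open neighborhood one finds points at which all eigenvalues are simple and the Jordan structure is trivially stable.

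\textbf{Stage 2 (Quantitative version).} Assume $X$ irreducible and normal, with $\mathrm{Jst\,}A\neq X$. By irreducibility, on $Y$ the number $m$ of distinct eigenvalues and the multiplicity multiset $\{n_1,\ldots,n_m\}$ are globally constant. Each minor of $\Theta_A^k=\prod_j(\lambda_j-A)^k$ is symmetric in the $\lambda_j$'s (the factors commute), and hence, by the fundamental theorem of symmetric polynomials, is a polynomial in the entries of $A$ and in the elementary symmetric functions $e_1,\ldots,e_m$ of the $\lambda_j$'s. The $e_i$'s are bounded on $Y$ by $\binom{n}{i}\|A\|^i$ and locally holomorphic there; Riemann's second extension theorem (applicable since $X$ is normal and $\mathrm{split\,}A$ is analytic) extends them holomorphically across $\mathrm{split\,}A$. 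The extended minors are holomorphic on $X$ and bounded by a polynomial in $\|A\|$. To replace them by genuine polynomials in the entries of $A$, one takes a Galois norm over the finitely many algebraic branches of $(e_1,\ldots,e_m)$ compatible with the coefficients of $\chi_A$ and the fixed multiplicity pattern, obtaining polynomials in the entries of $A$; together with the defining polynomials of $\mathrm{split\,}A$ from Theorem~\ref{25.11.16}, these give $h_1,\ldots,h_\ell$ in \eqref{6.12.16-}. The bound \eqref{6.12.16--} follows by tracking degrees: minors of order $\le n$ of $\Theta_A^k$ (with $k\le n-1$) have polynomial degree $\le n^2$ in $(A,e)$; substituting the polynomial-of-degree-$\le i$ bound on $e_i$ and allowing the Galois orbit of size $\le n!$ yields an exponent $\le 2n^4$.

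\textbf{Main obstacle.} The hardest step is the global polynomial-in-$A$ description. Locally the problem is solved by the minors of $\Theta_A^k$, but $\Theta_A=\mu(A)$ is not polynomial in $A$: its coefficients, the elementary symmetric functions $e_i$ of the distinct eigenvalues, depend only algebraically on the coefficients of $\chi_A$, and they even fail to extend continuously across $\mathrm{split\,}A$ if one insists on keeping $\mu$ at a fixed degree. The two-step resolution is (i) use normality of $X$ and Riemann's second extension theorem to extend the locally defined, locally bounded $e_i$'s across $\mathrm{split\,}A$, and (ii) eliminate the remaining algebraic dependence on the coefficients of $\chi_A$ via a Galois-norm construction. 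With both steps in place, the estimate \eqref{6.12.16--} is a matter of routine degree book-keeping.
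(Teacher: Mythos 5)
Your Stage 1 breaks down exactly at the point where the real work lies: the passage across $\mathrm{split\,}A$. You obtain $Y\setminus\mathrm{Jst\,}A$ as a closed analytic subset of the open set $Y=X\setminus\mathrm{split\,}A$ and then assert that its union with $\mathrm{split\,}A$ is a closed analytic subset of $X$ ``using that $\mathrm{Jst\,}A$ is open''. Closedness is not enough: a set which is analytic in $X\setminus S$ and whose union with an analytic set $S$ is closed need not be analytic in $X$ (already $\{1/k\,:\,k\in\N^*\}\cup\{0\}$ in $\C$, with $S=\{0\}$, shows this). Controlling the rank-jump locus as one approaches $\mathrm{split\,}A$ is precisely what the paper's proof does: for normal irreducible $X$ it extends $\zeta\mapsto\Theta_{A(\zeta)}$ holomorphically across $\mathrm{split\,}A$ (it is holomorphic off $\mathrm{split\,}A$ and bounded there by $2^m\Vert A(\zeta)\Vert^m$, so the Riemann extension theorem on normal spaces applies), then applies Lemma \ref{4.11.16} to the powers $\Theta^k$ on all of $X$, so that the jump set $Z$ is analytic in $X$ and $X\setminus\mathrm{Jst\,}A=Z\cup\mathrm{split\,}A$; for irreducible but possibly non-normal $X$ it passes to the normalization $\pi:\widetilde X\to X$ and pushes the result down with Remmert's proper mapping theorem; the general case then follows from the global decomposition into irreducible components. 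Your proposal contains neither the normalization step nor the decomposition argument, and the extension of the symmetric functions $e_i$ that you do perform (essentially the paper's extension of $\Theta_A$, correctly using normality) appears only in Stage 2 under the extra hypotheses, so the analyticity claim for an arbitrary complex space is not established. Your nowhere-density argument is also false as stated: there need not be any points with simple eigenvalues (a constant nilpotent family, or the example of Remark \ref{10.1.17++}, has none); nowhere-density instead follows because $\mathrm{split\,}A$ is nowhere dense and the chosen maximal minors of $\Theta^k$ do not vanish identically on the irreducible space.

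In Stage 2 the Galois-norm device is too vague and, as described, endangers \eqref{6.12.16-}: a norm over all branchings of the characteristic polynomial compatible with the multiplicity pattern vanishes as soon as \emph{any} branch's minor vanishes, so its zero set can be strictly larger than $X\setminus\mathrm{Jst\,}A$, and you give no argument that the spurious branches contribute no extra zeros. The paper does not need such a step: it takes for $h_1,\ldots,h_\ell$ the products of the functions $g_j$ defining $\mathrm{split\,}A$ (Theorem \ref{25.11.16}) with the non-identically-vanishing maximal minors $f^{(k)}$ of $\Theta^k$, and obtains \eqref{6.12.16--} directly from $\Vert\Theta(\zeta)^k\Vert\le 2^{mk}\Vert A(\zeta)\Vert^{mk}$ together with the bound on the $g_j$. (A small point of terminology: the extension result used is the Riemann extension theorem for locally bounded holomorphic functions on normal spaces, not the ``second'' Riemann extension theorem, which concerns removing sets of codimension at least two without boundedness.)
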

\begin{proof} For $\mathrm{Jst\,}A=\emptyset$, the claim of the theorem is trivial. Therefore we may assume that $\mathrm{Jst\,}A\not=\emptyset$.

 We first consider the case when $X$ is normal and irreducible.

Let $\mathrm{split\,}A$ be the set of splitting points of the eigenvalues of $A$, and let $X^0$ be the manifold of smooth points of $X$. Since $X^0$ is connected ($X$ is irreducible) and dense in $X$,  and $\mathrm{split\,}A$ is a nowhere dense analytic subset of $X$ (Theorem \ref{25.11.16}),  $X\setminus \mathrm{split\,}A$ is connected.

Consider the map
\begin{equation}\label{28.12.16}
X\setminus \mathrm{split\,}A\ni\zeta\longmapsto \Theta_{A(\zeta)}.
\end{equation}

By Lemma \ref{15.1.17'}, for each $\xi\in X\setminus \mathrm{split\,}A$, we have an open neighborhood $U_\xi\subseteq  X\setminus \mathrm{split\,}A$ of $\xi$ and holomorphic functions $\lambda^{(\xi)}_1,\ldots,\lambda^{(\xi)}_m:U_\xi\to \C$ such that, for all $\zeta\in U_\xi$, $\lambda^{(\xi)}_1(\zeta),\ldots,\lambda^{(\xi)}_m(\zeta)$ are the eigenvalues of $A(\zeta)$ and, hence,
\begin{equation}\label{28.12.16''}
\Theta_{A(\zeta)}=\big(\lambda_1^{(\xi)}(\zeta)-A(\zeta)\big)\cdot\ldots\cdot\big(\lambda^{(\xi)}_m(\zeta)-A(\zeta)\big).
\end{equation}
In particular this
 shows that \eqref{28.12.16} is holomorphic on $X\setminus \mathrm{split\,}A$.

Moreover, as $\vert\lambda_j(\zeta)\vert\le \Vert A(\zeta)\Vert$, from \eqref{28.12.16''} it follows that
\begin{equation}\label{28.12.16'}
\big\Vert \Theta_{A(\zeta)}\big\Vert\le 2^m\Vert A(\zeta)\Vert^m\quad\text{for all}\quad \zeta\in X\setminus \mathrm{split\,}A.
\end{equation} Since $X\cap \mathrm{split\,}A$ is a nowhere dense analytic subset of $X$, and $X$ is normal, this implies that \eqref{28.12.16} extends holomorphically to $X$. We denote this extended map by $\Theta$. By \eqref{28.12.16'}, then
\begin{equation}\label{3.3.17}
\big\Vert \Theta(\zeta)^k\big\Vert\le 2^{mk}\Vert A(\zeta)\Vert^{mk}\quad\text{for all}\quad \zeta\in X\text{ and }1\le k\le n.
\end{equation}Set
\[r_k=\max_{\zeta\in X}\rank\Theta(\zeta)^k\quad\text{for}\quad 1\le k\le n.
\]

{\em First case:} $r_1=0$. Then  $\big(\Theta_{A(\zeta)}\big)^k=0$ for all $\zeta\in X\setminus\mathrm{split\,}A$ and $k\in \N^*$.  In particular,  each  $\xi\in X\setminus\mathrm{split\,}A$ satisfies condition (iii) in Lemma \ref{24.12.16''}. Hence $X\setminus\mathrm{Jst\,}A=\mathrm{split\,}A$,
and the claim of the theorem  follows from Theorem \ref{25.11.16}.

{\em Second case:} $r_1>0$.
Then, by \eqref{29.12.16}, $n\ge 2$ and there is an integer $1\le k_0\le n-1$ with $r^{}_{k_0}>0$ and  $r^{}_{k_0+1}=0$. For $1\le k\le k_0$, let $f_1^{(k)},\ldots,f_{s_k}^{(k)}$ be the  minors of order $r_k$ of $\Theta^k$ which do not vanish identically on $X$.
Since $X$ is irreducible (i.e., the manifold of smooth points of $X$ is connected), and the functions $f_j^{(k)}$ are holomorphic and $\not\equiv 0$, none of them can vanish identically on an open subset of $X$. Hence,
\begin{equation}\label{5.3.17}
Z:=\bigcup_{k=1}^{k_0}\big\{f_1^{(k)}=\ldots=f_{s_k}^{(k)}=0\big\}
\end{equation}is a nowhere dense  analytic subset of $X$, and $\xi\in Z$ if and only if $\xi$  is a jump point (Def. \ref{3.11.16}) for at least one of the maps $\Theta^1,\ldots,\Theta^{k_0}$. Since $\Theta^k\equiv 0$ if $k_0+1\le k\le n-1$, the latter means that $\xi\in Z$ if and only if $\xi$ is a jump point for at least one of the maps $\Theta^1,\ldots,\Theta^{n-1}$.
In particular, $\xi\in Z\cap (X\setminus\mathrm{split\,}A)$ if and only if  $\xi\in(X\setminus\mathrm{split\,})A$ and $\xi$ is a jump point of at least one of the maps
\[
X\setminus\mathrm{split\,}A\longmapsto \big(\Theta_{A(\zeta)}\big)^1,\quad\ldots \quad,X\setminus\mathrm{split\,}A\longmapsto \big(\Theta_{A(\zeta)}\big)^{n-1},
\]i.e.,  $\xi\in Z\cap (X\setminus\mathrm{split\,}A)$ if and only if $\xi\in X\setminus\mathrm{split\,}A$ and $\xi$ violates condition (iii) in  Lemma \ref{24.12.16''}. Hence
\[
(X\setminus \mathrm{Jst\,}A)\cap (X\setminus \mathrm{split\,}A)= Z\cap (X\setminus \mathrm{split\,}A).
\]
Since $\mathrm{split\,}A\subseteq X\setminus\mathrm{Jst\,}A$, it follows that
\begin{equation}\label{3.1.17}
X\setminus\mathrm{Jst\,}A=Z\cup \mathrm{split\,}A.
\end{equation}

By  Theorem \ref{25.11.16}, we have finitely many holomorphic functions  $g_1,\ldots,g_p:X\to \C$, each of which
is a finite sum of finite products of elements of $A$, such that
\begin{equation}\label{3.1.17'}\mathrm{split\,}A=\big\{g_1=\ldots =g_p=0\big\},
\end{equation}and
\begin{equation}\label{3.1.17''}
\vert g_j(\zeta)\vert\le (2n)^{6n^2}\Vert A(\zeta)\Vert^{2n^2}\quad\text{for all }\zeta\in X\text{ and }1\le j\le p.
\end{equation}

Now let $\{h_1,\ldots,h_\ell\}$ be the set of all functions of the form
\[
g_j^{}\cdot\prod_{k=1}^{k_0} f^{(k)}_{\kappa_{k}}
\]
with $1\le j\le p$ and $1\le \kappa_k\le s_k$ for $1\le k\le k_0$. Then \eqref{6.12.16-} follows from \eqref{5.3.17}, \eqref{3.1.17}  and \eqref{3.1.17'}.

By \eqref{3.3.17}, for all $1\le k\le k_0$, $1\le j\le s_k$ and  $\zeta\in X$, we have
 \begin{equation*}
 \big\vert f_{j}^{(k)}(\zeta)\big\vert\le r_k!2^{mkr_k}\Vert A(\zeta)\Vert^{mkr_k}.
\end{equation*} Together with \eqref{3.1.17''} this yields estimate \eqref{6.12.16--} (recall that $n\ge 2$).

Next we consider the case when $X$ is irreducible, but, possibly, not normal.

Let $\pi:\widetilde X\to X$ be the normalization of $X$ (see, e.g., \cite[Ch. VI, \S 4]{L}) and $\widetilde A:=A\circ \pi$. Then $\widetilde X$ is normal and irreducible. Therefore, by part (i) of the theorem,
$\widetilde X\setminus \mathrm{Jst\,}\widetilde A$ is a nowhere dense closed analytic subset of $X$. Since, clearly,
\begin{equation}\label{5.1.17}
\pi\big(\widetilde X\setminus \mathrm{Jst\,}\widetilde A\big)=X\setminus \mathrm{Jst\,} A,
\end{equation}this implies, by Remmert's proper mapping theorem (see, e.g., \cite[Ch. V, \S 5.1]{L})), that  $X\setminus \mathrm{Jst\,}\widetilde A$ is a  closed analytic subset of $X$.

To prove that $X\setminus \mathrm{Jst\,} A$ is nowhere dense in $X$, let $X^0$ be the manifold of smooth points of $X$. Then $\pi$ is biholomorphic between $\pi^{-1}(X^0)$ and $X^0$, and, by \eqref{5.1.17},
\begin{equation*}
\pi\big(\pi^{-1}(X^0)\setminus \mathrm{Jst\,}\widetilde A\big)=X^0\setminus \mathrm{Jst\,} A.
\end{equation*} Since $\pi^{-1}(X^0)\setminus \mathrm{Jst\,}\widetilde A$ is nowhere dense in $\pi^{-1}(X^0)$, this implies that $X^0\setminus \mathrm{Jst\,} A$ is nowhere dense in $X^0$. Since $X\setminus X^0$ is nowhere dense in $X$, it follows that $X\setminus \mathrm{Jst\,}A$ is nowhere dense in $X$.

Finally, we consider the general case.

 By the global decomposition theorem for complex spaces (see, e.g., \cite[V.4.6]{L} or \cite[Ch. 9, \S 2.2]{GR}), there is a locally finite covering $\{X_i\}_{i\in I}$ of $X$ such that each $X_i$ is an irreducible closed analytic subset of $X$. Then, as already proved, each $X_i\setminus \mathrm{Jst\,}(A\vert_{X_i})$ is a nowhere dense analytic subset of $X_i$. Since the covering  $\{X_i\}_{i\in I}$ is locally finite and, clearly,
\begin{equation*}
X\setminus\mathrm{Jst\,}A=\bigcup_{i\in I}\Big(X_i\setminus \mathrm{Jst\,}(A\vert_{X_i})\Big),
\end{equation*}
this proves that $X\setminus\mathrm{Jst\,}A$ is a nowhere dense analytic subset of $X$.
\end{proof}

\begin{rem}\label{10.1.17+} Estimate \eqref{6.12.16--} shows that the claim of Theorem \ref{1.12.16'} can be completed. For example:

 -- If  $A$ is bounded, then $X\setminus\mathrm{Jst\,} A$ can be defined by bounded holomorphic functions. In the  case of the disk $X=\{\zeta\in\C\,\vert\,\vert\zeta\vert<1\}$ this implies that $X\setminus\mathrm{Jst\,} P$ satisfies the Blaschke condition.

-- If $X=\C^N$ and the elements of  $A$ are holomorphic polynomials, then $\C^N\setminus\mathrm{Jst\,} A$ is the common zero set of finitely many holomorphic polynomials, i.e., it is affine algebraic. For $N=1$ this means that $\C\setminus\mathrm{Jst\,} A$ is finite.

\end{rem}

\begin{rem}\label{10.1.17++} It is possible (in contrast to   Remark \ref{10.1.17-}) that the set of points which are not Jordan stable is of codimension $>1$, also at smooth points. Here is an example. Let
\[
A(z,w):=\begin{pmatrix} zw&-z^2\\w^2&-zw\end{pmatrix}\quad\text{for}\quad (z,w)\in \C^2.
\]
Then $A(z,w)^2=0$ for all $(z,w)\in \C^2$, and $A(z,w)=0$ if and only if $(z,w)=0$. This means that $\big(\begin{smallmatrix}0&0\\0&0\end{smallmatrix}\big)$ is the Jordan normal form of  $A(0,0)$, whereas, for all $(z,w)\in \C^2\setminus\{(0,0)\}$, $\big(\begin{smallmatrix}0&1\\0&0\end{smallmatrix}\big)$ is the Jordan normal form of $A(z,w)$. Hence, $(0,0)$ is the only point in $\C^2$ which is not Jordan stable for $A$.\end{rem}

\section{Continuous boundary values}\label{4.12.16'}

In Sections \ref{8.1.17} and \ref{7.1.17'},  we described the set of Jordan stable points for holomorphic matrices. Corresponding results can be obtained also for certain other classes of continuous matrices. In the present section we consider the following example.

Let $X$ be a connected open set of some $\C^N$ and let $\overline X$ be the closure of $X$ in $\C^N$. Assume that the boundary of $X$, $\partial X:=\overline X\setminus X$, is $\Cal C^0$ smooth.\footnote{By that we mean that, for each $\xi\in \partial X$, there is a neighborhood of $\xi$ in $\overline X$ which is homeomorphic to a set of the form $U\cap\big\{(x_1,\ldots,x_{2N})\in \R^{2N}\,\big\vert\, x_1\ge 0\big\}$, where $U$ is an open subset of $\R^{2N}$.}
Let   $A:\overline X \to \mathrm{Mat}(n\times n,\C)$ be continuous on $\overline X$, and  holomorphic in $X$. Denote by $\mathrm{split\,}A$ the set of splitting points of the eigenvalues of  $A$ (Def. \ref{24.11.16+} ), and by $\mathrm{Jst\,}A$ the set of Jordan stable points of  $A$ (Def. \ref{6.2.17} ).

Then it is easy to check that only minor  modifications of the proof of Theorem \ref{25.11.16} (and the proofs of the results used in this proof) are necessary to obtain the following

\begin{thm}\label{11.1.17} If $\mathrm{split\,}A\not=\emptyset$, then there exist  finitely many continuous on $\overline X$ and holomorphic in $X$  functions on $h_1,\ldots,h_\ell:\overline X\to \C$, each of which is a finite sum of finite products  of elements of $A$, such that
\[\mathrm{split\,}A=\big\{h_1=\ldots= h_\ell=0\big\}
\]and
\begin{equation}\label{10.1.17'neu}
\vert h_j(\zeta)\vert\le (2n)^{6n^2} \Vert A(\zeta)\Vert^{2n^2}\quad\text{for all}\quad \zeta\in \overline X\text{ and }1\le j\le \ell.
\end{equation}
\end{thm}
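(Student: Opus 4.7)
The plan is to adapt the proof of Theorem \ref{25.11.16} with essentially one change: density of $X$ in $\overline X$ and connectedness of $X$ will play the role previously played by the irreducibility of the ambient complex space. Set $P(\zeta)(\lambda):=\det(\lambda-A(\zeta))$; the coefficients $P_0,\ldots,P_{n-1}$ are sums of products of entries of $A$, so $P$ is continuous on $\overline X$ and holomorphic in $X$, and $\mathrm{split\,}A=\mathrm{split\,}P$.

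Next, form the $(2n-1)\times(2n-1)$ matrix $M(\zeta)$ from the coefficients of $P$ via the explicit formulas \eqref{14.11.16} and \eqref{14.11.16'}. Then $M$ is continuous on $\overline X$ and holomorphic in $X$. Put $r_{\max}:=\max_{\zeta\in X}\rank M(\zeta)$, and let $h_1,\ldots,h_\ell$ be the minors of $M$ of order $r_{\max}$ which do not vanish identically on $X$. Each $h_j$ is a polynomial in the $P_\mu$, hence a finite sum of finite products of entries of $A$, and each $h_j$ is continuous on $\overline X$ and holomorphic in $X$. Since $\rank M$ is lower semicontinuous and $X$ is dense in $\overline X$, for any $\xi\in\overline X$ one has $\rank M(\xi)\le\liminf_{\zeta\to\xi,\,\zeta\in X}\rank M(\zeta)\le r_{\max}$, so $r_{\max}$ is also the global maximum of $\rank M$ on all of $\overline X$.

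The key step is the analog of Lemma \ref{4.11.16} in this setting: for $\xi\in\overline X$ one has $\rank M(\xi)<r_{\max}$ if and only if $\xi$ is a jump point of $\rank M$ on $\overline X$. One direction is immediate, since at a jump point the rank is strictly less than some nearby rank, hence strictly less than $r_{\max}$. For the converse, suppose $\rank M(\xi)<r_{\max}$ and $\xi$ is not a jump point. Combining the non-jump hypothesis with lower semicontinuity forces $\rank M$ to be constantly equal to $\rank M(\xi)<r_{\max}$ on some $\overline X$-neighborhood $V$ of $\xi$. Then every $r_{\max}$-minor of $M$ vanishes on $V\cap X$, a nonempty open subset of $X$ by density. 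Since these minors are holomorphic on the connected set $X$, the identity principle forces them to vanish on all of $X$, contradicting the definition of $r_{\max}$. Invoking Lemma \ref{20.2.15} to translate ``jump point of $\rank M$'' into ``$P$ acquires extra zeros'' then gives $\mathrm{split\,}A=\{h_1=\ldots=h_\ell=0\}$.

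The estimate \eqref{10.1.17'neu} is obtained pointwise, exactly as in the proof of Theorem \ref{25.11.16}. Namely, each $h_j$ is a minor of order at most $2n-1$ whose entries are integer multiples (of absolute value $\le n$) of the coefficients $P_\mu$, so the Leibniz expansion gives $|h_j(\zeta)|\le(2n)^{4n}\max_{0\le\mu\le n-1}|P_\mu(\zeta)|^{2n}$; combining with the Hadamard-type bound $|P_\mu(\zeta)|\le n!\,\Vert A(\zeta)\Vert^n$ yields \eqref{10.1.17'neu} for all $\zeta\in\overline X$. The only real obstacle is the verification that density and connectedness of $X$ are enough to replace irreducibility in the identity-principle step; once that is in hand, every other part of the argument transfers verbatim.
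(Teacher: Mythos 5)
Your proposal is correct and is essentially the proof the paper intends, since the paper itself only says that Theorem \ref{11.1.17} follows by ``minor modifications'' of the proof of Theorem \ref{25.11.16}: you rerun the resultant-matrix argument of Theorems \ref{10.11.16} and \ref{25.11.16} on $\overline X$, with irreducibility replaced by connectedness of $X$ together with density of $X$ in $\overline X$, lower semicontinuity of the rank, and the identity principle, which is exactly the modification needed to carry the jump-point characterization of Lemma \ref{4.11.16} (and hence, via Lemma \ref{20.2.15}, the identification with $\mathrm{split\,}A$) over to boundary points; the only unstated micro-step is that minors vanishing identically on $X$ also vanish on $\overline X$ by continuity and density, which is needed for the reverse inclusion at boundary points and is immediate. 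The estimate \eqref{10.1.17'neu} is derived pointwise exactly as the paper derives \eqref{4.1.17} and \eqref{10.1.17'}, so your treatment matches the paper's own.
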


To describe the full set $X\setminus\mathrm{Jst\,}A$, we need some  preparation.

\begin{lem-defn}\label{12.1.17+}  Let $\xi\in\mathrm{split\,A}$,  let $\mu_1,\ldots,\mu_k$ be the eigenvalues of $A(\xi)$, and set $\D(\mu_j,\varepsilon):=\{\lambda\in \C\,\vert\, \vert \lambda-\mu_j\vert<\varepsilon\}$ for $1\le j\le k$. Then there exist positive integers $\kappa_1,\ldots,\kappa_k$ such that the following holds:

 If $\varepsilon>0$ and the disks $\D(\mu_1,\varepsilon),\ldots,\D(\mu_k,\varepsilon)$ are pairwise disjoint, then there exists a neighborhood $U_\xi$ in $\overline X$ of $\xi$  such that

{\em (i)} for each $\zeta\in U_\xi$,  all eigenvalues of $A(\zeta)$ lie
in  $D(\mu_1,\varepsilon)\cup\ldots\cup D(\mu_k,\varepsilon)$,

{\em  (ii)} for each $\zeta\in U_\xi\setminus \mathrm{split\,}A$ and each $1\le j\le k$, exactly $\kappa_j$ of the eigenvalues of $A(\zeta)$ lie in $\D(\mu_j,\varepsilon)$.

We  call $\kappa_j$ the {\em{\bf splitting amount}} of $\mu_j$.
\end{lem-defn}
\begin{proof} Let  $\varepsilon>0$ be given such that the disks $\D(\mu_1,\varepsilon),\ldots,\D(\mu_k,\varepsilon)$ are pairwise disjoint.

Let $P_A(\zeta)(\lambda):=\det(\lambda-A(\zeta))$ be the characteristic polynomial of $A(\zeta)$.
Since $A$ is continuous and no zeros of $P_A(\xi)$ lie on $ \partial \D(\mu_1,\varepsilon)\cup\ldots\cup \partial \D(\mu_k,\varepsilon)$, we can find a neighborhood $U_\xi$ in $\overline X$ of $\xi$ so small that
\begin{multline}\label{13.1.17}
\big\vert P_A(\zeta)(\lambda)-P_A(\xi)(\lambda)\big\vert<\big\vert P_A(\xi)(\lambda)\big\vert\\\text{for all }\zeta\in U_\xi\text{ and }\lambda\in \partial \D(\mu_1,\varepsilon)\cup\ldots\cup \partial \D(\mu_k,\varepsilon).
\end{multline}
 Since  $\partial X$ is $\Cal C^0$ smooth, we may moreover assume that $U_\xi\cap X$ is connected. As $\mathrm{split\,}A$ is a nowhere dense analytic subset of $X$ (Theorem \ref{25.11.16}), it follows that $(U_\xi\cap X)\setminus \mathrm{split\,}A$ is connected. Since $\overline X\setminus \mathrm{split\,}A$ is open in $\overline X$ (by Theorem \ref{11.1.17}) and since $\partial D$ is $\Cal C^0$ smooth, the connectedness of $(U_\xi\cap X)\setminus \mathrm{split\,}A$ implies that also  $U_\xi\setminus \mathrm{split\,}A$ is connected.

Let $\nu_j$ be the algebraic multiplicity of $\mu_j$ as an eigenvalue of $A(\xi)$, i.e., its order as a zero of $P_A(\xi)$. Then
\begin{equation}\label{13.1.17'}
\nu_1+\ldots+\nu_k=n.
\end{equation}
From \eqref{13.1.17} it follows, by Rouch\'e's theorem, that, for each $\zeta\in U_\xi$ and each $1\le j\le k$, \underline{counting} multiplicities, exactly $\nu_j$ zeros of $P_A(\zeta)$ lie in $\D(\mu_j,\varepsilon)$. By \eqref{13.1.17'} this implies that, for each $\zeta\in U_\xi$, counting multiplicities, exactly $n$ zeros of $P_A(\zeta)$ lie in $\D(\mu_1,\varepsilon)\cup\ldots\cup\D(\mu_k,\varepsilon)$. Since the degree of $P_A(\zeta)$ is $n$,  this proves (i).

To prove (ii), for $\zeta\in U_\xi\setminus\mathrm{split\,}A$ and $1\le j\le k$, we denote by $\kappa_j(\zeta)$ the number of eigenvalues of $A(\zeta)$ in $\D(\mu_j,\varepsilon)$, not counting multiplicities.  We have to prove that the functions
\begin{equation}\label{13.1.17-}U_\xi\setminus\mathrm{split\,}A\ni\zeta\longmapsto \kappa_j(\zeta)
\end{equation} are constant. Since $U_\xi\setminus\mathrm{split\,}A$ is connected, it is sufficient to prove that these maps are locally constant.

For that, fix $\zeta_0\in U_\xi\setminus\mathrm{split\,}A$, and let  $w^{(j)}_s$, $1\le s\le\kappa_j(\zeta_0)$, be the eigenvalues of $A(\zeta_0)$ which lie in $\D(\mu_j,\varepsilon)$,  $1\le j\le k$.
Choose $\delta>0$ so small that the disks
\[
\D\big(w^{(j)}_s,\delta\big):=\Big\{\lambda\in\C\;\Big\vert\; \big\vert\lambda-w^{(j)}_s\big\vert<\delta\Big\},\quad 1\le j\le k,\;1\le s\le \kappa_j(\zeta_0),
\]are pairwise disjoint, and
\begin{equation}\label{13.1.17'''}
\D\big(w_s^{(j)},\delta\big)\subseteq \D(\mu_j,\varepsilon)\quad
\text{for all }1\le j\le k\text{ and }1\le s\le \kappa_j(\zeta_0).
\end{equation}  Then, by Lemma \ref{15.1.17'}, there exists a neighborhood $V_{\zeta_0}\subseteq U_\xi\setminus \mathrm{split\,}A$ of $\zeta_0$ and continuous functions
\[
\lambda_{s}^{(j)}:V_{\zeta_0}\longrightarrow \D\big(w_s^{(j)},\delta\big)
\]such that, for  all $\zeta\in V_{\zeta_0}$,
\[
\lambda^{(j)}_s(\zeta),\quad 1\le j\le k,\;1\le s\le \kappa_j(\zeta_0),
\]are the eigenvalues of $A(\zeta)$. Since the disks $\D\big(w_s^{(j)},\delta\big)$ are pairwise disjoint, and by \eqref{13.1.17'''}, this in particular means that $\kappa_j(\zeta)=\kappa_j(\zeta_0)$ for all $\zeta\in V_{\zeta_0}$ and all $1\le j\le k$. Hence, the functions \eqref{13.1.17-} are constant in a neigborhood of $\zeta_0$.
\end{proof}

\begin{defn}\label{14.1.17'}For $\zeta\in \overline X$, let  $\lambda_1(\zeta),\ldots,\lambda_{m(\zeta)}(\zeta)$ be the eigenvalues of $A$, and let $\kappa_1(\zeta),\ldots,\kappa_{m(\zeta)}(\zeta)$ be the splitting amounts of $\lambda_1(\zeta),\ldots,\lambda_{m(\zeta)}(\zeta)$, respectively. Then, for $\zeta\in\overline X\setminus\mathrm{split\,}A$, we define
\[
\Theta_A(\zeta)=\begin{cases}
\big(\lambda_1(\zeta)-A(\zeta)\big)\cdot\ldots\cdot
\big(\lambda_{m(\zeta)}(\zeta)-A(\zeta)\big)\quad\qquad\qquad\text{if}\quad\zeta\in \overline X\setminus\mathrm{split\,}A,\\
\big(\lambda_1(\zeta)-A(\zeta)\big)^{\kappa_1(\zeta)}\cdot\ldots\cdot
\big(\lambda_{m(\zeta)}(\zeta)-A(\zeta)\big)^{\kappa_{m(\zeta)}}\quad\text{if}\quad \zeta\in\mathrm{split\,}A.
\end{cases}\]

\end{defn}
\begin{lem}\label{13.1.17---}
The function
\begin{equation}\label{13.1.17+}
\overline X\ni\zeta\longmapsto \Theta_A(\zeta)
\end{equation}is continuous on $\overline X$, and holomorphic in $X$.
\end{lem}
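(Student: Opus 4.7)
The plan is to verify continuity of $\Theta_A$ directly on all of $\overline X$ and then to deduce holomorphicity in $X$ via Riemann's extension theorem. At any point $\xi_0 \in \overline X \setminus \mathrm{split\,}A$ the verification is routine: Lemma~\ref{15.1.17'} supplies a connected neighborhood $U_{\xi_0} \subseteq \overline X \setminus \mathrm{split\,}A$ and continuous (resp.\ holomorphic in $X$) eigenvalue functions $\lambda_1,\dots,\lambda_m$, so $\Theta_A(\zeta) = \prod_{i=1}^m (\lambda_i(\zeta) - A(\zeta))$ is manifestly continuous on $U_{\xi_0}$ and holomorphic in $X \cap U_{\xi_0}$. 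Once continuity on $\overline X$ is established, $\Theta_A|_X$ is locally bounded, is holomorphic off the nowhere dense analytic set $X \cap \mathrm{split\,}A$ (Theorem~\ref{25.11.16}), and therefore extends holomorphically to all of $X$ by Riemann's extension theorem on the complex manifold $X$; the extension must agree with $\Theta_A$ by continuity.

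The core of the proof is continuity at a splitting point $\xi \in \mathrm{split\,}A$. Write $\mu_1,\dots,\mu_k$ and $\kappa_1,\dots,\kappa_k$ for the eigenvalues of $A(\xi)$ and their splitting amounts; choose $\varepsilon > 0$ so that the disks $\D(\mu_j,\varepsilon)$ are pairwise disjoint, and let $U_\xi$ be the neighborhood given by Lemma--Definition~\ref{12.1.17+}. Since the factors $\lambda - A(\zeta)$ pairwise commute, for every $\zeta \in U_\xi$ I would regroup the product defining $\Theta_A(\zeta)$ according to which disk $\D(\mu_j,\varepsilon)$ contains each eigenvalue of $A(\zeta)$:
\[
\Theta_A(\zeta) \;=\; \prod_{j=1}^k \Theta_{A,j}(\zeta),\qquad \Theta_{A,j}(\zeta) \;:=\; \prod_{\lambda}\bigl(\lambda - A(\zeta)\bigr)^{\tau_\lambda(\zeta)},
\]
where the inner product is over the distinct eigenvalues $\lambda$ of $A(\zeta)$ lying in $\D(\mu_j,\varepsilon)$, and $\tau_\lambda(\zeta) = 1$ if $\zeta \notin \mathrm{split\,}A$, otherwise $\tau_\lambda(\zeta)$ equals the splitting amount of $\lambda$ at $\zeta$. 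The decisive fact is the degree identity $\sum_\lambda \tau_\lambda(\zeta) = \kappa_j$ for every $\zeta \in U_\xi$: for $\zeta \notin \mathrm{split\,}A$ this is precisely part~(ii) of Lemma--Definition~\ref{12.1.17+} at $\xi$, while for $\zeta \in \mathrm{split\,}A \cap U_\xi$ it follows from a second application of Lemma--Definition~\ref{12.1.17+} now at $\zeta$, which locally resolves each distinct eigenvalue of $A(\zeta)$ into $\tau_\lambda(\zeta)$ nearby non-split eigenvalues, matched against the count $\kappa_j$ delivered by the application at $\xi$.

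With the factorization in hand I would close the argument by a telescoping estimate for finite products of commuting matrices. Writing $\Theta_{A,j}(\zeta)$ as a product of exactly $\kappa_j$ commuting factors $\lambda - A(\zeta)$ and comparing each to $\mu_j - A(\xi)$,
\[
\bigl\|\Theta_{A,j}(\zeta) - (\mu_j - A(\xi))^{\kappa_j}\bigr\| \;\le\; \kappa_j\, C^{\kappa_j-1} \max_{\lambda} \bigl(|\lambda - \mu_j| + \|A(\zeta) - A(\xi)\|\bigr),
\]
where $C$ is a uniform bound on the relevant norms for $\zeta$ close to $\xi$. As $\zeta \to \xi$, $A(\zeta) \to A(\xi)$ by continuity, and a Rouch\'e argument together with the uniqueness of $\mu_j$ as an eigenvalue of $A(\xi)$ in $\overline{\D(\mu_j,\varepsilon)}$ forces every $\lambda \in \mathrm{Spec}\,A(\zeta) \cap \D(\mu_j,\varepsilon)$ to converge to $\mu_j$. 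Hence $\Theta_{A,j}(\zeta) \to (\mu_j - A(\xi))^{\kappa_j}$, and multiplying over $j$ yields $\Theta_A(\zeta) \to \prod_j (\mu_j - A(\xi))^{\kappa_j} = \Theta_A(\xi)$.

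I expect the main obstacle to be the degree identity $\sum_\lambda \tau_\lambda(\zeta) = \kappa_j$ at splitting points $\zeta \in \mathrm{split\,}A \cap U_\xi$: this is what makes the splitting-amount definition of $\Theta_A$ on $\mathrm{split\,}A$ consistent with its limits from non-splitting points. It amounts to a compatibility statement between the local splitting behavior at $\xi$ (counted within the fixed disks $\D(\mu_j,\varepsilon)$) and the local splitting behavior at a nearby splitting point $\zeta$ (counted within arbitrarily small disks about each distinct eigenvalue of $A(\zeta)$). The remaining ingredients -- the telescoping matrix estimate, Lemma~\ref{15.1.17'}, and Riemann extension -- are standard.
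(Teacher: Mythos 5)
Your proposal is correct, and its first half (continuity and holomorphy off $\mathrm{split\,}A$ via Lemma \ref{15.1.17'}, then Riemann extension across the nowhere dense analytic set $X\cap\mathrm{split\,}A$) is exactly the paper's argument. The genuine difference is how continuity at a point $\xi\in\mathrm{split\,}A$ is handled. The paper never needs your degree identity: since $\mathrm{split\,}A$ is nowhere dense in $\overline X$ (this is where the $\Cal C^0$ boundary enters), it suffices to verify $\Vert\Theta_A(\zeta)-\Theta_A(\xi)\Vert<\varepsilon$ only for $\zeta\in U_\xi\setminus\mathrm{split\,}A$; this is done by choosing $\delta$ and $U_\xi$ so that $\bigl\Vert\prod_{j,i}\bigl(z^{(j)}_i-A(\zeta)\bigr)-\Theta_A(\xi)\bigr\Vert<\varepsilon$ for \emph{arbitrary} points $z^{(j)}_i\in\D(\mu_j,\delta)$ (joint continuity of the product in the scalars and in $A(\zeta)$), and then inserting, for non-splitting $\zeta$, the $\kappa_j$ eigenvalues per disk supplied by Lemma--Definition \ref{12.1.17+}; splitting points $\zeta$ near $\xi$ are then taken care of for free by density of the non-splitting points and the triangle inequality. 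You instead prove $\Theta_A(\zeta)\to\Theta_A(\xi)$ along \emph{all} $\zeta$, which forces the compatibility $\sum_\lambda\tau_\lambda(\zeta)=\kappa_j$ between splitting amounts at a nearby splitting point $\zeta$ and at $\xi$. Your sketch of that identity is sound: apply Lemma--Definition \ref{12.1.17+} at $\zeta$, choosing the small disks inside the big disks $\D(\mu_j,\varepsilon)$ (possible by part (i) applied at $\xi$), and count the distinct eigenvalues of $A(\eta)$ at a nearby non-splitting point $\eta$ in two ways. But note that this step needs exactly the same input (non-splitting points are dense in $\overline X$ near $\zeta$, even for boundary points $\zeta$) that the paper uses to bypass the identity altogether, so the paper's route is shorter; your telescoping estimate plays the role of the paper's appeal to continuity of the product map. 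What your route buys is a stronger structural fact not needed for the lemma, namely the additivity of splitting amounts under further splitting. (Minor point: you use $\varepsilon$ both for the disk radius and for the continuity tolerance; keep these separate as in the paper's $\varepsilon$ and $\delta$.)
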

\begin{proof}
By Lemma \ref{15.1.17'}, for each $\xi\in \overline X\setminus \mathrm{split\,}A$, we have a neighborhood $U$ in $\overline X\setminus \mathrm{split\,}A$ of $\xi$, and functions $\lambda_1,\ldots,\lambda_m:U\to \C$ (uniquely determined up to the order), which are continuous on $U$ and holomorphic in $U\cap X$, such that, for each $\zeta\in U$, $\lambda_1(\zeta),\ldots,\lambda_m(\zeta)$ are the eigenvalues of $A(\zeta)$ and, therefore, $m=m(\zeta)$ and
\begin{equation*}
\Theta_{A(\zeta)}=\big(\lambda_1(\zeta)-A(\zeta)\big)\cdot\ldots\cdot\big(\lambda_m(\zeta)-A(\zeta)\big)\quad\text{for}\quad\zeta\in U.
\end{equation*}
This shows that \eqref{13.1.17+} is continuous on $\overline X\setminus \mathrm{split\,}A$, and holomorphic in $X\setminus \mathrm{split\,}A$.

It remains to prove that \eqref{13.1.17+} is continuous at each point of $\mathrm{split\,}A$. The holomorphy on $X$ then follows from the Riemann extension theorem, as   $X\cap\mathrm{split\,}A$ is a nowhere dense closed analytic subset of $X$ (by Theorem \ref{25.11.16}).

Since $X\cap\mathrm{split\,}A$ is nowhere dense in $X$ and $\partial X$ is $\Cal C^0$ smooth, it follows that $\mathrm{split\,}A$ is nowhere dense in $\overline X$. Therefore it is sufficient to prove that, for each $\xi\in\mathrm{split\,}A$ and each $\varepsilon>0$, there exists a neighborhood $U_\xi$ in $\overline X$ of $\xi$ such that
\begin{equation}\label{14.1.17'neu}
\big\Vert\Theta_A(\zeta)-\Theta_A(\xi)\big\Vert<\varepsilon\quad\text{if}\quad \zeta\in U_\xi\setminus\mathrm{split\,}A.
\end{equation}

Let $\xi\in \mathrm{split\,}A$ and $\varepsilon>0$ be given.

Let $\mu_1,\ldots,\mu_k$ be the eigenvalues of $A(\xi)$, and let $\kappa_j$ be the splitting amount of $\mu_j$. Since $A$ is continuous, then we can  find a neighborhood $U_\xi$ in $\overline X$ of $\xi$, and  $\delta>0$ so small that the disks $\D(\mu_j,\delta):=\{\lambda\in \C\,\vert\, \vert \lambda-\mu_j\vert<\delta\}$, $1\le j\le k$, are pairwise disjoint, and, for all $\zeta\in U_\xi$ and all $z^{(j)}_i\in\D(\mu_j,\delta)$, $1\le j\le k$, $1\le i\le \kappa_j$,
\begin{equation*}
\bigg\Vert\prod_{j=1}^k\prod_{i=1}^{\kappa_j}\Big(z^{(j)}_i-A(\zeta)\Big)-\prod_{j=1}^k\Big(\mu_j-A(\xi)\Big)^{\kappa_j}
\bigg\Vert<\varepsilon,
\end{equation*}and, hence,
\begin{equation}\label{14.1.17+}
\bigg\Vert\prod_{j=1}^k\prod_{i=1}^{\kappa_j}\Big(z^{(j)}_i-A(\zeta)\Big)-\Theta_A(\xi)
\bigg\Vert<\varepsilon.
\end{equation}
By Lemma \ref{12.1.17+}, shrinking $U_\xi$, we can moreover achieve that
\begin{itemize}
\item[-] for each $\zeta\in U_\xi$,  all eigenvalues of $A(\zeta)$ lie
in  $D(\mu_1,\varepsilon)\cup\ldots\cup D(\mu_k,\delta)$, and
\item[-]for each $\zeta\in U_\xi\setminus \mathrm{split\,}A$ and each $1\le j\le k$, exactly $\kappa_j$ of the eigenvalues of $A(\zeta)$, which we denote by $\lambda_1^{(j)}(\zeta), \ldots, \lambda_{\kappa_j}^{(j)}(\zeta)$, lie in $\D(\mu_j,\delta)$.
\end{itemize}
Then, by \eqref{14.1.17+}, for all $\zeta\in U_\xi\setminus\mathrm{split\,}A$,
\begin{equation*}
\bigg\Vert\prod_{j=1}^k\prod_{i=1}^{\kappa_j}\Big(\lambda^{(j)}_i(\zeta)-A(\zeta)\Big)-\Theta(\xi)
\bigg\Vert<\varepsilon.
\end{equation*}
where, by definition of $\Theta(\zeta)$,
\[
\prod_{j=1}^k\prod_{i=1}^{\kappa_j}\Big(\lambda^{(j)}_i(\zeta)-A(\zeta)\Big)=\Theta(\zeta).
\]which proves \eqref{14.1.17'neu}.
\end{proof}
By this lemma, slightly  modifying the proof of Theorem \ref{1.12.16'}, one obtains
\begin{thm}\label{14.1.17++} $X\setminus \mathrm{Jst\,}A$ is a nowhere dense analytic analytic subset of $X$,
$\overline X\setminus \mathrm{Jst\,}A$ is nowhere dense in $\overline X$,   and, if $\mathrm{Jst\,}A\not=\overline X$, then
there exist finitely many  functions $h_1,\ldots,h_\ell:\overline X\to \C$, which are continuous on $\overline X$, and holomorphic in $X$, such that \begin{equation}\label{14.1.17+++}\overline X\setminus\mathrm{Jst\,}A=\big\{h_1=\ldots= h_\ell=0\big\},\end{equation}
and
\begin{equation}\label{14.1.17+++neu}
\vert h_j(\zeta)\vert\le (2n)^{2n^4}\Vert A(\zeta)\Vert^{2n^4}\quad\text{for all}\quad \zeta\in \overline X\text{ and }1\le j\le \ell.
\end{equation}
\end{thm}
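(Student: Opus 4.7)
The plan is to adapt the proof of Theorem \ref{1.12.16'} to the boundary setting, with Lemma \ref{13.1.17---} taking the role that normality and the Riemann extension theorem played previously. I would work throughout with the map $\Theta_A:\overline X\to \mathrm{Mat}(n\times n,\C)$ of Definition \ref{14.1.17'}, which by Lemma \ref{13.1.17---} is continuous on $\overline X$ and holomorphic in $X$. Since at every $\zeta\in\overline X$ the sum of the exponents in the product defining $\Theta_A(\zeta)$ is $\le n$ (either the number of eigenvalues of $A(\zeta)$, or, at a splitting point, the total splitting amount from Lemma \ref{12.1.17+}), and each factor has norm $\le 2\Vert A(\zeta)\Vert$, one obtains $\Vert\Theta_A(\zeta)^k\Vert\le 2^{nk}\Vert A(\zeta)\Vert^{nk}$. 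Moreover $\Theta_A^n\equiv 0$, because this holds on $\overline X\setminus\mathrm{split\,}A$ by \eqref{29.12.16} and extends to $\mathrm{split\,}A$ by continuity.

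Set $r_k:=\max_{\zeta\in\overline X}\rank\Theta_A(\zeta)^k$. If $r_1=0$, then every point of $\overline X\setminus\mathrm{split\,}A$ satisfies condition (iii) of Lemma \ref{24.12.16''}, so $\overline X\setminus\mathrm{Jst\,}A=\mathrm{split\,}A$ and Theorem \ref{11.1.17} finishes the proof. Otherwise let $1\le k_0\le n-1$ be maximal with $r_{k_0}>0$, and for each $1\le k\le k_0$ let $f^{(k)}_1,\ldots,f^{(k)}_{s_k}$ denote the minors of order $r_k$ of $\Theta_A^k$ that do not vanish identically on $\overline X$. The central step, replacing Lemma \ref{4.11.16}, is to show that $\{f^{(k)}_1=\ldots=f^{(k)}_{s_k}=0\}$ is exactly the set of jump points of $\rank\Theta_A^k$ on $\overline X$. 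If $\xi$ were not a jump point yet $\rank\Theta_A(\xi)^k<r_k$, then by lower semicontinuity the rank would be constant on some neighborhood $U$ of $\xi$ in $\overline X$, so every $f^{(k)}_j$ would vanish on $U$; since $X$ is a connected open subset of $\C^N$ and $U\cap X$ is a nonempty open subset of $X$, the holomorphic functions $f^{(k)}_j$ would vanish on all of $X$, and by continuity on all of $\overline X$, contradicting the choice of the $f^{(k)}_j$.

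To conclude, I would combine with Theorem \ref{11.1.17}: take functions $g_1,\ldots,g_p$, continuous on $\overline X$ and holomorphic in $X$, with $\mathrm{split\,}A=\{g_1=\ldots=g_p=0\}$ and the bound \eqref{10.1.17'neu}, and enumerate as $h_1,\ldots,h_\ell$ all products $g_{j_0}\cdot f^{(1)}_{j_1}\cdots f^{(k_0)}_{j_{k_0}}$. Then $\{h_1=\ldots=h_\ell=0\}$ coincides with $\mathrm{split\,}A\cup\bigcup_{k=1}^{k_0}\{f^{(k)}_1=\ldots=f^{(k)}_{s_k}=0\}$, which by Lemma \ref{24.12.16''}(iii), applied on the open set $\overline X\setminus\mathrm{split\,}A$ where $\Theta_A$ agrees with $\Theta_{A(\zeta)}$, equals $\overline X\setminus\mathrm{Jst\,}A$, giving \eqref{14.1.17+++}. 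The estimate \eqref{14.1.17+++neu} follows from \eqref{10.1.17'neu}, the bound $|f^{(k)}_j|\le r_k!\,\Vert\Theta_A^k\Vert^{r_k}$, and a routine exponent count using $r_k,k_0\le n$. That $X\setminus\mathrm{Jst\,}A$ is nowhere dense in $X$ is Theorem \ref{1.12.16'} applied to the complex manifold $X$, and nowhere denseness in $\overline X$ then follows from density of $X$ in $\overline X$. The main obstacle is the minor characterization in the previous paragraph, where one must combine holomorphy in $X$ with density of $X$ in $\overline X$ and continuity on $\overline X$ to propagate a local vanishing to a global one.
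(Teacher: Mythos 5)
Your proposal is correct and follows essentially the route the paper intends: the paper only says the theorem is obtained from Lemma \ref{13.1.17---} by ``slightly modifying the proof of Theorem \ref{1.12.16'}'', and your argument is exactly that modification, with the extended map $\Theta_A$ replacing the normality/Riemann-extension step and the identity theorem on the connected open set $X$ (plus density of $X$ in $\overline X$ and continuity) replacing Lemma \ref{4.11.16} in the jump-point characterization. The remaining steps (combination with Theorem \ref{11.1.17}, the product functions $h_j$, and the exponent count for \eqref{14.1.17+++neu}) match the original proof of Theorem \ref{1.12.16'}.
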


\end{document}